\newtheorem{thm}{Theorem}
\newtheorem{prop}{Proposition}
\newtheorem{lemma}{Lemma}
\theoremstyle{definition}
\newtheorem{defn}{Definition}
\newcommand{\ignore}[1]{}
\newcommand{\MM}{\mathcal{M}}
\newcommand{\F}{\mathcal{F}}
\renewcommand{\H}{\mathcal{H}}
\newcommand{\G}{\mathcal{G}}
\renewcommand{\S}{\mathcal{S}}
\newcommand{\C}{\mathcal{C}}
\newcommand{\T}{\mathcal{T}}
\newcommand{\E}{\mathcal{E}}
\newcommand{\D}{D}
\renewcommand{\P}{\mathcal{P}}
\newcommand{\R}{R}
\begin{document}

\title{Affine and projective tree metric theorems}
\author{Aaron Kleinman}
\address{Department of Mathematics, UC Berkeley}
\email{\{kleinman\}@math.berkeley.edu}
\author{Matan Harel}
\address{Department of Mathematics, NYU}
\author{Lior Pachter}
\address{Departments of Mathematics and Molecular \& Cell Biology, UC Berkeley}
\email{\{lpachter\}@math.berkeley.edu}

\begin{abstract}
The tree metric theorem provides a combinatorial four point condition
that characterizes dissimilarity maps derived from pairwise compatible
split systems. A related weaker four point condition
characterizes dissimilarity maps derived from circular split systems
known as Kalmanson metrics. The tree metric theorem was first discovered in
the context of phylogenetics and forms the basis of many tree
reconstruction algorithms, whereas Kalmanson metrics were first
considered by computer scientists,
and are notable in that they are a non-trivial class of metrics for which the traveling salesman problem is tractable. 

We present a unifying framework for these theorems based on
combinatorial structures that are used for graph planarity
testing. These are (projective) PC-trees, and
their affine analogs, PQ-trees. In the projective case, we generalize
a number of concepts from clustering theory, including hierarchies,
pyramids, ultrametrics and Robinsonian matrices, and the theorems that
relate them. As with tree metrics and ultrametrics, the link between
PC-trees and PQ-trees is established via the Gromov product.
\end{abstract}
\thanks{We thank Laxmi Parida for introducing us to the applications of PQ
trees in biology during a visit to UC Berkeley in 2008. Thanks also to
an anonymous reviewer whose careful reading of an initial draft of the
paper helped us greatly. AK was funded by an NSF graduate research fellowship.}
\keywords{hierarchy, Gromov product, Kalmanson metric, Robinsonian metric, PC tree, PQ tree,
  phylogenetics, pyramid, ultrametric}
\maketitle
\section{Introduction}
In his ``Notebook B: Transmutation of Species'' (1837), Charles Darwin
drew a single figure to illustrate the
shared ancestry of extant species (Figure 1). That figure is a pictorial depiction
of a graph known as a {\em rooted $X$-tree}.
\begin{defn}\label{def:rootedXTree}
An {\em $X$-tree} $\T = (T,\phi)$ is a pair where $T = (V,E)$ is a tree and $\phi: X \to V$ is a 
bijection from $X$ to the leaves of $T$. Two $X$-trees $\T_1 = (T_1, \phi_1)$,  $\T_2 = (T_2, \phi_2)$
are isomorphic if there exists a graph isomorphism $\Phi: T_1 \to T_2$ such that $\phi_2 = \Phi \circ \phi_1$.
A {\em rooted $X$-tree} is an $X$-tree with a distinguished vertex $r$.
\end{defn}
\begin{figure}
\label{fig1}
\begin{center}
\includegraphics[scale = 0.8]{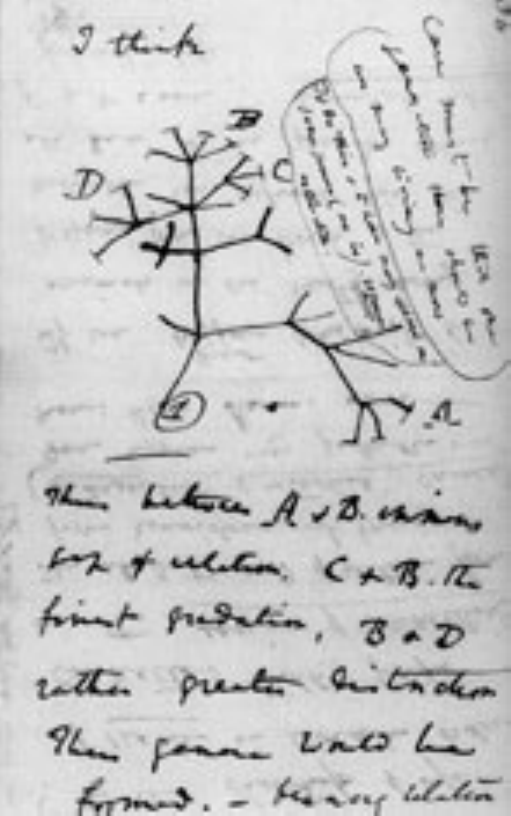}
\end{center}
\caption{A page from Darwin's Notebook B on the Transmutation of Species
  showing a rooted $X$-tree (the root is labeled with ``1'')
  where the set $X=\{A,B,C,D\}$.}
\end{figure}
The biological interpretation of rooted $X$-trees lies in the
identification of the leaves with extant species, and the vertices along a path from a 
leaf to the root as ancestral species. Although the validity of trees in describing the
ancestry of species has been debated \cite{Huson2005b}, trees are now used to
describe the shared ancestry of individual nucleotides in genomes and
they are perfectly suited for that purpose \cite{Dewey2006}.

In phylogenetics, it is desirable to associate
lengths with the edges of trees. Such lengths may correspond to time (in
years), or to the number of mutations (usually an estimate based on a
statistical model). This leads to the notion of a tree metric, that is
conveniently understood via the notion of a weighted split system.
Here and in what follows, for simplicity we often take $X = \{1,2,\ldots,n\}$.
\begin{defn}\label{def:split}
A \emph{split} $S$ of $X$ is a partition $A|B$ of $X$ into two nonempty subsets. The corresponding \emph{split pseudometric} is given by 
\[
\D_S(i,j) = \begin{cases} 1 & \mathrm{if\ } |\{i,j\} \cap A|=1,
 \\ 0 & \mathrm{otherwise}. \end{cases}
\]
A split $A|B$ is \emph{trivial} if $|A|=1$ or $|B|=1$. A \emph{split system} is a set of splits containing all the trivial splits.
\end{defn}
Removing an edge of an $X$-tree  disconnects the tree into two pieces and thus gives a split $S_e$ of $X$. We can associate a split system 
$\{S_e | e \in E(\T)\}$  to $\T$ by doing this for each edge.
\begin{defn}\label{def:dissmap}\label{def:Sadditive}
A \emph{dissimilarity map} is a function $\D:X \times X \to \mathbf{R_{\geq 0}}$ such that $\D(i,i)=0$ and $\D(i,j)=\D(j,i)$ $\forall i,j \in X$.
A dissimilarity map $\D$ is \emph{$\S$-additive} if there is a split system $\S$ such that
\[
\D = \sum_{S \in \S} w(S) \D_S
\]
for some non-negative weighting function $w:\S \to \mathbf{R}_{\geq 0}$. If $\S$ is the split system associated to an $X$-tree $\T$, we say $\D$ is $\T$-additive. $\D$ is a \emph{tree metric} if it is $\T$-additive for some $X$-tree.
\end{defn}
The following classic theorem precisely characterizes metrics that come from trees:
\begin{thm}\label{thm:fourpoint}
A dissimilarity map $\D$ is a tree metric if and only if for each $i,j,k,l \in X$, the value
\begin{equation}\label{eq:fourpoint}
\max\{\D(i,j)+\D(k,l), \D(i,k)+\D(j,l), \D(i,l)+\D(j,k)\}
\end{equation}
is realized by at least two of the three terms.
\end{thm}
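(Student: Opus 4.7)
For the forward direction, I would fix four elements $i,j,k,l$ and consider the minimal subtree of $\T$ spanning their images under $\phi$. Up to relabeling, this subtree is either a star, in which case all three sums in \eqref{eq:fourpoint} agree, or it has a unique internal edge $e$ separating $\{i,j\}$ from $\{k,l\}$. In the latter case, writing $a_i, a_j, a_k, a_l$ for the pendant lengths of the spanning subtree and $w(e)$ for the length of the interior edge, a direct computation yields
\[
\D(i,k) + \D(j,l) \;=\; \D(i,l) + \D(j,k) \;=\; \D(i,j) + \D(k,l) + 2w(e),
\]
so two of the three sums coincide and bound the third, verifying the four-point condition.

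For the reverse direction, which is the main obstacle, I would induct on $n := |X|$. The cases $n \le 3$ are immediate, since any dissimilarity map on at most three points is realized by a star tree with pendant lengths given by the Gromov products $\tfrac{1}{2}(\D(x,y) + \D(x,z) - \D(y,z))$. For the inductive step, the plan is to locate a pair of leaves $a,b$ that form a cherry in the putative tree, contract them to a single new element $c$, apply the inductive hypothesis, and graft $a,b$ back as pendant edges at $c$. Concretely, on $X' := (X \setminus \{a,b\}) \cup \{c\}$ I would define the reduced map by
\[
\D'(c,x) \;=\; \tfrac{1}{2}\bigl(\D(a,x) + \D(b,x) - \D(a,b)\bigr)
\]
for $x \in X \setminus \{a,b\}$ and $\D' = \D$ otherwise, verify that $\D'$ still satisfies the four-point condition, apply induction to obtain an $X'$-tree $\T'$, and attach $a$ and $b$ at $c$ with edge lengths $\tfrac{1}{2}(\D(a,b) + \D(a,x) - \D(b,x))$ and $\tfrac{1}{2}(\D(a,b) + \D(b,x) - \D(a,x))$ respectively, for any witness $x$.

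The hard part will be the cherry-finding step together with its attendant consistency checks. To locate $a,b$ from $\D$ alone, I would select a pair that makes an appropriate extremal quantity (for example the Gromov product from a fixed base point, or $\D(a,b)$ minus $\D(a,x) + \D(b,x)$ for generic $x$) as small as possible, and then use the four-point condition on each quadruple $\{a,b,x,y\}$ to conclude that the maximum in \eqref{eq:fourpoint} is attained by the two cross sums, which is precisely the condition that $\{a,b\}$ and $\{x,y\}$ sit on opposite sides of an internal edge. This local information then has to be bootstrapped: one must check that the reduced map $\D'$ satisfies the four-point condition on $X'$ by invoking the four-point condition for $\D$ on quintuples $\{a,b,x,y,z\}$, and also that the pendant lengths for $a$ and $b$ above are independent of the choice of witness $x$, which again uses the four-point condition on $\{a,b,x,y\}$. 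Both are finite case analyses, but it is precisely here that the full strength of the hypothesis is consumed, and where the induction closes.
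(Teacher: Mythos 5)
The paper states Theorem~\ref{thm:fourpoint} without proof, as a classic result; the proof strategy it endorses (sketched at the start of Section~\ref{sec:metrics}) is the projective-to-affine reduction: apply the Gromov product $\phi_r$ to convert the four-point condition into the three-point (ultrametric) condition, invoke the correspondence between ultrametrics and indexed hierarchies (the metric extension of Proposition~\ref{thm:zero}, via \cite{Jardine1967}), and push the resulting weighted hierarchy back through $\delta_r$ to a weighted pairwise compatible split system, i.e.\ a tree. Your proposal is a genuinely different and equally standard route: direct induction on $|X|$ by cherry contraction. The trade-off is roughly this: the Gromov-product route concentrates all the work in the ultrametric/hierarchy correspondence and is the one that generalizes to the Kalmanson/Robinsonian setting that is the paper's actual subject (compare Lemma~\ref{lem:bonus} and Proposition~\ref{prop:RtoIP}), whereas your induction is self-contained and constructive --- it is essentially the correctness proof of agglomerative tree reconstruction --- but does not port to the circular split systems treated later.

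Two caveats. First, everything you have labelled ``the hard part'' is the entire content of the theorem, and it remains deferred: the existence of a cherry detectable from $\D$ alone, the preservation of the four-point condition by $\D'$ (a five-point case analysis), and the independence of the pendant lengths from the witness $x$ are each honest lemmas, and at least the cherry lemma needs to be carried out before this counts as a proof. On that point, your two candidate selection criteria point in opposite directions: the cherry is the pair $\{a,b\}$ that \emph{maximizes} the Gromov product $\tfrac{1}{2}(\D(a,x)+\D(b,x)-\D(a,b))$ at a \emph{fixed} base point $x$ (equivalently, minimizes $\D(a,b)-\D(a,x)-\D(b,x)$); minimizing the Gromov product instead selects a pair straddling the basepoint, and ``generic $x$'' should read ``some fixed $x$.'' Second, your base case asserts that any dissimilarity map on three points is realized by a star with pendant lengths given by Gromov products, but Definition~\ref{def:dissmap} does not impose the triangle inequality, so those lengths can be negative, and the four-point condition with repeated arguments is vacuous and does not supply it. This is a known blemish in the classical statement rather than in your argument specifically, but your induction (in particular the nonnegativity of $\D'$ and of the grafted edge lengths) uses it silently, so you should either assume $\D$ is a metric or flag the issue.
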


Equation \ref{eq:fourpoint} is called the \emph{four-point condition}.
Theorem \ref{thm:fourpoint} motivates the development of algorithms
for identifying tree metrics that closely approximate dissimilarity maps obtained from
biological data. In molecular evolution, dissimilarity maps are derived by determining
distances between DNA sequences according to probabilistic models of
evolution; more generally, algorithms that approximate dissimilarity
maps by tree metrics can be applied to any distance matrices derived
from data. However, dissimilarity maps derived from data 
are never exact tree metrics due to two reasons: first, as discussed above, some
evolutionary mechanisms may not be realizable on trees; second, even
in cases where evolution is described well by a tree, finite sample sizes and random
processes underlying evolution lead to small deviations from ``treeness''
in the data.

It is therefore desirable to generalize the notion of a tree, and a
natural approach is to consider adding splits to those associated with
an $X$-tree. One natural class of split systems to consider is the following.
\begin{defn}\label{def:circular}
A \emph{circular ordering} $\C=\{x_1,\ldots,x_n\}$ is a bijection between $X$ and the vertices of a convex $n$-gon $P_n$
such that $x_i$ and $x_{i+1}$ map to adjacent vertices of $P_n$ (where $x_{n+1}:=x_1$).
Let $S_{i,j}$ denote the split\newline  $\{x_i, x_{i+1},\ldots,x_{j-1}\} | \{x_j,x_{j+1},\ldots,x_{i-1}\}$ and let $\S(\C) = \{S_{i,j} | i<j\}$. 
We say a split $S$ is \emph{circular} with respect to a circular ordering $\C$ if $S \in \S(\C)$, 
and a split system $\S$ is circular if $\S \subseteq \S(\C)$ for some circular ordering $\C$. 
\end{defn}
Given a set $\E$ of circular orderings, let $\S(\E)=\cap_{\C \in \E} \S(\C)$ be the system of splits that are circular
with respect to each ordering in $\E$. The split system associated
with a binary $X$-tree arises in this way from a family $\E$ of $2^{n-2}$ circular
orderings \cite{Semple2004}, and a tree metric is obtained by associating non-negative
weights to each split in the system. \emph{Kalmanson metrics}, which
were first introduced in the study of traveling salesmen problems 
where they provide a class of metrics for which the optimal tour can
be identified in polynomial time
\cite{Kalmanson1975}, correspond to the case when $|\E|=1$. 

Kalmanson metrics can be visualized using {\em split networks}
\cite{Huson2005b}. We do not provide a definition in this paper, but
show an example in Figure 2 (drawn using the software SplitsTree4).
\begin{figure}[!ht]
\label{fig2}
\includegraphics[scale=0.5]{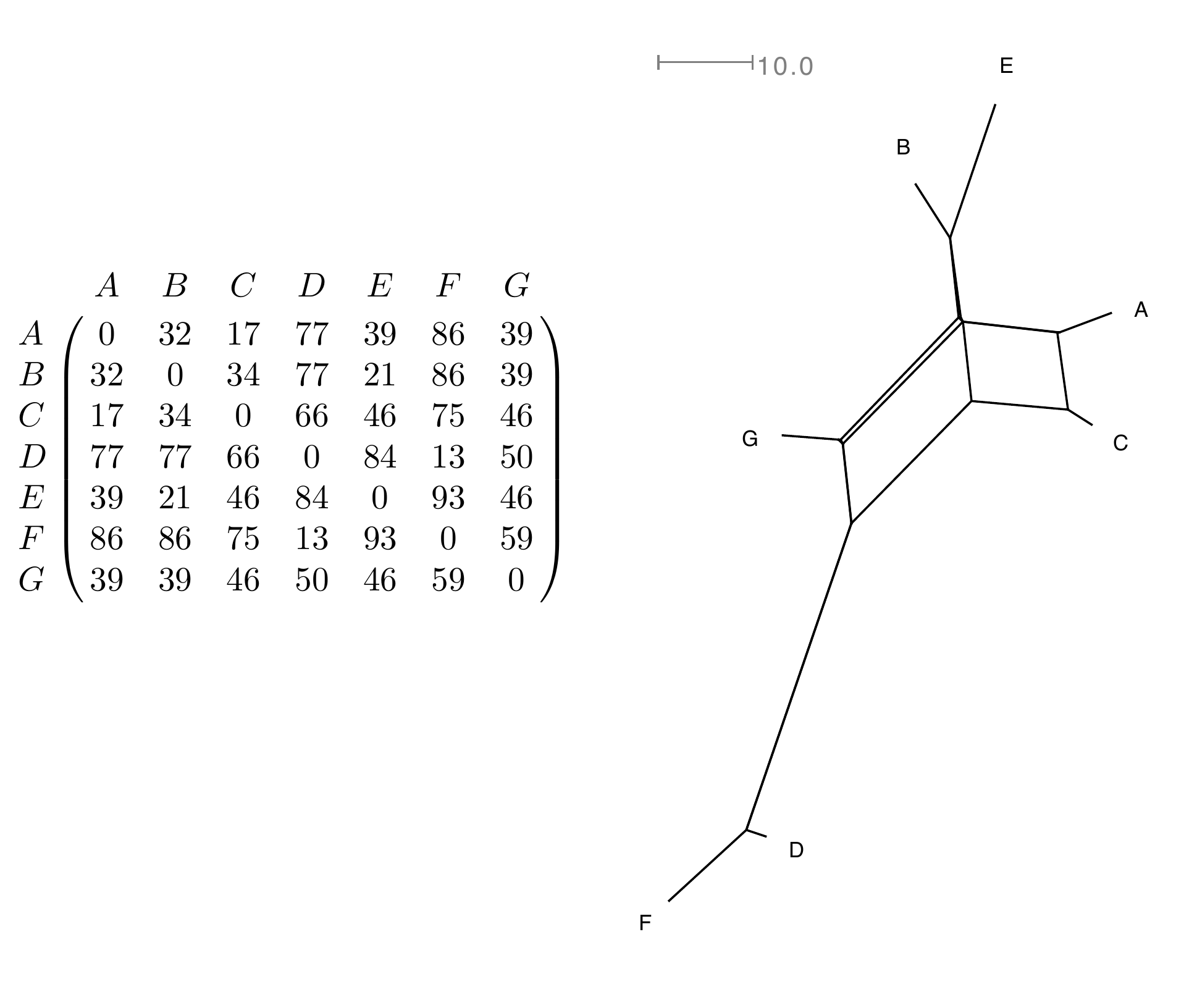}
\caption{A Kalmanson metric (left) visualized as a split network (right).}
\end{figure}
The neighbor-net \cite{Bryant2004} and MC-net
\cite{Eslahchi2010} algorithms provide a way to construct circular split
systems from dissimilarity maps, but despite having a number of useful
properties \cite{Bryant2007,Levy2010}, have not been widely adopted in
the phylogenetics community. This is likely because split networks (such as in Figure
2) fail to reveal the ``treeness'' of the
data. More specifically, the internal nodes of the split network do not
correspond to meaningful ``ancestors'' as do the internal nodes in $X$-trees.
Other approaches to visualizing ``treeness'', e.g. \cite{White2007}, do
reveal the extent of signal conflicting with a tree in data, but do
not reveal in detail the splits underlying the discordance. 

We propose that PQ- and PC-trees, first developed in the context of
the consecutive ones problem \cite{Booth1976,McConnell2003} and for graph
planarity testing \cite{Shih1999,Hsu2001}, are convenient
structures that interpolate between $X$-trees and full circular split
systems. The main result of this paper is Theorem
\ref{thm:bestfitPC}. Given a Kalmanson metric, the theorem shows how
to construct a ``best-fit'' PC-tree which realizes the metric and
captures the``treeness'' of it.

Another (expository) goal of this paper is to organize existing results on
PC-trees, their cousins PQ-trees, and corresponding metrics (Theorem \ref{thm:rectangle} in
Section \ref{sec:metrics}).
We believe this is the first paper to present the various results as
part of a single unified framework.  As a prelude, we illustrate the types of results
we derive using a classic theorem relating rooted $X$-trees to special set systems that encode information about shared
ancestry.
\begin{defn}\label{def:hierarchy}
A \em{hierarchy} $\H$ over a set $X$ is a collection of subsets of $X$ such that
\begin{enumerate}
\item $X \in \H$, and $\{x\} \in \H$ for all $x \in X$
\item $A \cap B \in \{\emptyset, A, B\}$ for all $A, B \in \H$.
\end{enumerate}
\end{defn}
The requirement that each $\{x\} \in \H$ is not part of the usual definition of hierarchy but its inclusion here will simplify later results.
We then have the following:
\begin{prop}\label{thm:zero}
There is a natural bijection between hierarchies over $X$ and rooted $X$-trees.
\end{prop}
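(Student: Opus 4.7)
The plan is to exhibit explicit maps in both directions and check they are mutual inverses. Throughout I will tacitly restrict to rooted $X$-trees in which every internal vertex has at least two children, since this is the natural normalization needed for a genuine bijection (see the obstacle noted at the end).

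First I would define a map $\Phi$ from rooted $X$-trees to hierarchies. Given $\T = (T,\phi)$ with root $r$, for each vertex $v \in V(T)$ let
\[
C(v) = \{x \in X : \phi(x) \text{ lies in the subtree of } T \text{ rooted at } v\},
\]
and set $\Phi(\T) = \{C(v) : v \in V(T)\}$. The three conditions for a hierarchy are straightforward: $C(r) = X$; for each leaf $\phi(x)$ one has $C(\phi(x)) = \{x\}$; and for any two vertices $u,v$ either one lies on the path from the other to $r$ (giving containment $C(u) \subseteq C(v)$ or vice versa) or the two subtrees are disjoint (giving $C(u) \cap C(v) = \emptyset$). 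This is the key laminarity input.

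Next I would define the inverse map $\Psi$. Given a hierarchy $\H$, take the vertex set to be $\H$ itself, declare $X \in \H$ to be the root, and place an edge between $A, B \in \H$ whenever $A \subsetneq B$ with no $C \in \H$ satisfying $A \subsetneq C \subsetneq B$. The laminar condition guarantees that each non-root element of $\H$ has a unique minimal strict superset in $\H$, so the resulting graph is a rooted tree. Its leaves are exactly the singletons $\{x\}$, and $\phi(x) := \{x\}$ is a bijection from $X$ to the leaves. So $\Psi(\H)$ is a rooted $X$-tree.

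Finally I would check $\Phi \circ \Psi = \mathrm{id}$ and $\Psi \circ \Phi = \mathrm{id}$ up to isomorphism of rooted $X$-trees. For $\Phi \circ \Psi$, at a vertex $A \in \Psi(\H)$ the descendant leaves are precisely the singletons contained in $A$, so the associated cluster recovers $A$; thus the set system returned by $\Phi$ is $\H$. For $\Psi \circ \Phi$, the map $v \mapsto C(v)$ is a vertex bijection between $\T$ and $\Psi(\Phi(\T))$ that sends root to root and preserves edges (an edge $(u,v)$ of $\T$ with $u$ the parent becomes the covering relation $C(v) \subsetneq C(u)$ in $\H$, and conversely), and it clearly respects $\phi$. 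Naturality follows because $\Phi$ is defined intrinsically from the tree structure, so isomorphic $X$-trees are sent to the same hierarchy.

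The main obstacle is the injectivity of $v \mapsto C(v)$: if $\T$ has an internal vertex $v$ with a single child $w$, then $C(v) = C(w)$ and $\Phi$ collapses the two vertices, so $\Psi(\Phi(\T))$ is a proper contraction of $\T$ rather than $\T$ itself. The clean statement of Proposition \ref{thm:zero} therefore implicitly normalizes $X$-trees by suppressing such degree-two internal vertices (equivalently, demanding every non-leaf has at least two children); under this convention the verification above goes through without change.
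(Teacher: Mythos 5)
Your proof is correct and takes essentially the same approach as the paper's: the cluster map $v \mapsto C(v)$ (the paper's $H_v$) in one direction and the Hasse diagram of the inclusion order in the other. Your explicit remark about suppressing internal vertices with a single child addresses a genuine point that the paper leaves implicit (it quietly works with ``normal'' trees), and it is worth noting that because every singleton lies in $\H$, the tree $\Psi(\H)$ automatically has no such vertices, so the normalization is exactly what makes the two maps mutually inverse.
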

Proposition \ref{thm:zero}, which we will prove constructively in
Section 2, is an elementary but classic result and has been discovered
repeatedly in a variety of
contexts \cite{Edmonds1977,Gusfield1991}. For example, in computer science, hierarchies are known as {\em laminar families}
where they play an important role in the development of recursive
algorithms represented by rooted trees (see, e.g. \cite{Fakcharoenphol2003}).
Hierarchies are also important because they are the combinatorial structures that underlie {\em ultrametrics}.
\begin{defn}\label{def:ultrametric}
An {\em ultrametric} is a symmetric function $\D: X \times X \to \mathbf{R}$ such that 
\[
\D(x,y) \leq \mathrm{max}\{\D(x,z), \D(y,z)\} \ \ \forall x,y,z \in X
\]
\end{defn}
\begin{defn}\label{def:indexed}
An {\em indexed hierarchy} is a hierarchy $\H$ with a non-negative function $f: \H \to \mathbf{R}_{\geq 0}$ such that for all $A,B \in \H$, $A \subset B \Rightarrow f(A) \leq f(B)$. 
\end{defn}
The extension of Proposition \ref{thm:zero} to metrics, proved in \cite{Jardine1967}, shows that these
objects are the same. The proposition is an instance of a {\em tree metric theorem} that
associates a class of combinatorial objects (in this case rooted $X$-trees)
with a class of metrics (in this case ultrametrics). Our results
organize other tree metric theorems that
have been discovered (in some cases independently and multiple times)
in the contexts of biology, mathematics and computer science.

In particular, we investigate relaxations of Definitions \ref{def:rootedXTree}, \ref{def:Sadditive} and \ref{def:hierarchy} for
which there exist analogies of Theorem \ref{thm:fourpoint} and
Proposition \ref{thm:zero}. For example,
hierarchies are special cases of {\em pyramids} \cite{Diday1986}, which can be
indexed to produce {\em strong  Robinsonian matrices} \cite{Robinson1951}. Proposition \ref{prop:RtoIP}
(originally proved in \cite{Diday1986}) states that these objects
correspond to each other mimicking the correspondence between hierarchies and
ultrametrics. 

In discussing tree metric theorems we adopt the nomenclature of Andreas
Dress who distinguishes two types of objects and theorems: the {\em
  affine} and the {\em projective} \cite{Dress1997}. Roughly speaking,
these correspond to ``rooted'' and ``unrooted'' statements
respectively, and we use these terms interchangeably. For example, a
hierarchy is an affine concept whose projective analog is a {\em
  pairwise compatible split system}.
Similarly, unrooted $X$-trees are the projective equivalents of
rooted $X$-trees, and tree metrics are the projective equivalents of
ultrametrics. We'll see that {\em Kalmanson metrics} are to tree
metrics as {\em Robinsonian matrices} \cite{Robinson1951} are to
ultrametrics, and circular split systems are to pairwise
compatible split systems as pyramids are to
hierarchies.
We use PQ-trees \cite{Booth1976}  and their projective analogs 
  PC-trees \cite{Shih1999} to link all of these results. 

\section{Hierarchies, $X$-trees and split systems}
We begin by proving Proposition \ref{thm:zero}, both for completeness and to introduce
some of the notation that we use. An $X$-tree $\T = (V, E, \phi)$ has a natural partial ordering on its vertices: 
for distinct $u,v\in V$, we say
$u \preccurlyeq v$ if $v$ lies on the unique path from $u$ to
the root. Given $v\in V$, let $H_v = \{x\in X |
x\preccurlyeq v \}$ and $\alpha(\T) = \{H_v | v\in V\}$.
\begin{prop}\label{prop:one}
The map $\alpha$ is a bijection from rooted $X$-trees to hierarchies over $X$.
\end{prop}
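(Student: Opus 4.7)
The plan is to construct an explicit inverse $\beta$ to $\alpha$ and verify that the two maps are mutually inverse up to $X$-tree isomorphism. First I would check that $\alpha(\T)$ is indeed a hierarchy. Condition (1) of Definition \ref{def:hierarchy} is immediate: the root $r$ gives $H_r = X$, and each leaf $\phi(x)$ gives $H_{\phi(x)} = \{x\}$. For condition (2), take two vertices $u,v \in V$ and let $w$ be their least common ancestor under $\preccurlyeq$. If $u \preccurlyeq v$ or $v \preccurlyeq u$, then one of $H_u, H_v$ contains the other; otherwise $u$ and $v$ lie below distinct children of $w$, so the leaves below them are disjoint and $H_u \cap H_v = \emptyset$.

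Next I would define $\beta$. Given a hierarchy $\H$ on the finite set $X$, let the vertex set of the tree $T_\H$ be $\H$ itself, and declare $B$ to be the parent of $A \in \H \setminus \{X\}$ when $B$ is the inclusion-minimal element of $\{C \in \H : A \subsetneq C\}$. The critical point, which uses the hierarchy axiom, is that this minimum exists and is unique: if $B_1, B_2 \in \H$ both properly contain $A$, then $B_1 \cap B_2 \supseteq A \neq \emptyset$, so the axiom forces $B_1 \cap B_2 \in \{B_1, B_2\}$; hence the proper supersets of $A$ form a chain, which is finite and therefore has a least element. Rooting $T_\H$ at $X$ and setting $\phi_\H(x) = \{x\}$ turns $T_\H$ into a rooted $X$-tree, since the minimality of $\emptyset$ in the inclusion order (and its absence from $\H$) makes the singletons precisely the leaves.

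Finally I would verify the two compositions. For $\alpha(\beta(\H)) = \H$, I would show that the leaves of $T_\H$ in the subtree below $A$ are exactly the singletons $\{x\}$ with $x \in A$: one inclusion follows by descending any chain $A = A_0 \supsetneq A_1 \supsetneq \cdots \supsetneq \{x\}$ in $\H$, and the reverse inclusion because any leaf $\{x\} \preccurlyeq A$ in $T_\H$ satisfies $\{x\} \subseteq A$. For $\beta(\alpha(\T)) \cong \T$, the assignment $v \mapsto H_v$ is the obvious candidate isomorphism, and the fact that it commutes with the edge and leaf-labeling structure reduces to the observation (from the first paragraph) that the parent of $v$ in $\T$ yields the inclusion-minimal proper superset of $H_v$ in $\alpha(\T)$.

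The main subtlety, rather than a genuine obstacle, is that $v \mapsto H_v$ fails to be injective whenever $\T$ has an internal vertex with a single child, since then that vertex and its child produce the same subset of $X$. The standard remedy, which I would adopt without further comment, is to tacitly restrict to rooted $X$-trees in which every non-leaf vertex has at least two children; this is the usual convention in phylogenetics and it makes the correspondence $\alpha \leftrightarrow \beta$ a genuine bijection.
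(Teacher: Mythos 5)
Your proof is correct and follows essentially the same route as the paper's: verify $\alpha(\T)$ satisfies the hierarchy axioms, then reconstruct the tree by taking $\H$ as the vertex set with edges given by minimal inclusion (the chain argument you give for uniqueness of the minimal proper superset is exactly the paper's ``two distinct parents'' contradiction). Your closing caveat about internal vertices with a single child is a genuine point the paper silently ignores --- the inverse construction only ever produces trees in which every internal vertex has at least two children, so the stated bijection implicitly requires that convention --- and flagging it is a small improvement over the paper's argument.
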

\begin{proof}
\verb"   "Let $(T,\phi)$ be a rooted $X$-tree with root $r$. $H_r = X$ 
and $H_{\phi(x)} = \{x\}$ for all $x \in X$, so $\H$ satisfies (1) of Definition \ref{def:hierarchy}.
Consider any two $H_u, H_v \in \alpha(\T)$. If $u \preccurlyeq v$ then $H_u \cap H_v = H_u$,
if $v \preccurlyeq u$ then $H_u \cap H_v = H_v$, and otherwise $H_u \cap H_v = \emptyset$.
So each pair of elements in $\alpha(\T)$ satisfies (2) of Definition \ref{def:hierarchy} and $\alpha(\T)$ is a hierarchy.

For the reverse direction, let $\H$ be a hierarchy over $X$. Let $T=(V,E)$ be the digraph with $V = \{v_A | A \in \H\}$ and with edges denoting minimal inclusion:
$T$ has an edge from $v_B$ to $v_A$ iff
$A \subset B$ and there does not exist $C \in \H$ such that $A \subsetneq C \subsetneq B$.
We will show that $T$ is a tree. First note that by induction on $|C|$ each vertex $v_C$ with $C \neq X$ is connected to $v_X$ and has at least one parent.
Now suppose $v_{A}, v_{B}$ are distinct parents of $v_C$.
Then $A \cap B \neq \emptyset$, so without loss of generality  by the hierarchy condition $A \subset B$.
But then $C \subset A \subset B$, a contradiction. Thus $T$ is connected and has one fewer edge than vertices, so $T$ is a tree with root $v_X$.
Define the map $\phi: X \to V$ by $\phi_{\{x\}} = v_x$. This is a bijection from $X$ to the leaves, so 
$\T=(T,\phi)$ is a rooted $X$-tree with $\alpha(\T) = \H$.
\end{proof}
The above proposition gives a characterization of rooted $X$-trees in terms of collections of subsets of $X$. 
We turn now to the projective analogue of rooted $X$-trees. 
\begin{defn}\label{def:split2}
Two $X$-splits $A_1|B_1, A_2|B_2$ are \emph{compatible} if one of $A_1 \cap A_2, A_1 \cap B_2, B_1 \cap A_2, B_1 \cap B_2$ is
empty, and are \emph{incompatible} otherwise. 
\end{defn}

Removing an edge $e$ from a projective $X$-tree disconnects the tree
into two pieces and thus gives an $X$-split $S_e$. Let $\beta(\T) =
\{S_e | e \in E(T)\}$. It is easy to check that $\beta(\T)$ is a
pairwise compatible split system, and a basic theorem of $X$-trees
\cite{Buneman1971}\cite{ASCB2005} shows every pairwise compatible split system arises in this way, giving
\begin{prop}\label{thm:splitsequiv}
$\beta$ is a bijection from the set of projective $X$-trees to the set
of pairwise compatible split systems over $X$.
\end{prop}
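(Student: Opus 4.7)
The plan is to verify three facts: (i) $\beta(\T)$ is a pairwise compatible split system for every projective $X$-tree $\T$; (ii) $\beta$ is injective; (iii) $\beta$ is surjective. Part (iii) is the main content and will be reduced to Proposition~\ref{prop:one}.

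For (i), given two edges $e_1, e_2$ of $\T$, removing both from $T$ partitions the vertex set into three components, one of which (say $C_m$) lies on the tree path between the other two, $C_L$ and $C_R$. With $X_L, X_m, X_R$ the respective intersections with $\phi(X)$, after relabeling we may write $S_{e_1} = X_L \mid X_m \cup X_R$ and $S_{e_2} = X_L \cup X_m \mid X_R$, so $X_L \cap X_R = \emptyset$ witnesses compatibility. All trivial splits come from leaf edges, so $\beta(\T)$ is a split system in the sense of Definition~\ref{def:split}. For (ii), $\T$ can be reconstructed from $\beta(\T)$ inductively by locating a \emph{cherry} (two leaves sharing a common neighbor)---which is precisely a pair $\phi(x), \phi(y)$ separated by no nontrivial split in $\beta(\T)$---and contracting it to obtain a smaller $X$-tree whose split system is determined by $\beta(\T)$.

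For (iii), fix any $x_0 \in X$ and set
\[
\H \;=\; \{A \subseteq X : A \mid X \setminus A \in \S \text{ and } x_0 \notin A\} \cup \{X, \{x_0\}\}.
\]
Every singleton lies in $\H$ because $\S$ contains all trivial splits, and $X \in \H$ by construction. For $A_1, A_2 \in \H$ arising from splits $A_1 \mid B_1, A_2 \mid B_2 \in \S$ with $x_0 \in B_1 \cap B_2$, pairwise compatibility forces one of $A_1 \cap A_2$, $A_1 \cap B_2$, or $B_1 \cap A_2$ to be empty (since $B_1 \cap B_2 \ni x_0$), which translates respectively to $A_1 \cap A_2 = \emptyset$, $A_1 \subseteq A_2$, or $A_2 \subseteq A_1$. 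The remaining pairs involving $X$ or $\{x_0\}$ trivially satisfy the hierarchy condition, so $\H$ is a hierarchy. Proposition~\ref{prop:one} then produces a rooted $X$-tree $\T_r$ with $\alpha(\T_r) = \H$. Unrooting $\T_r$---and suppressing the root $v_X$, which has degree two because $\{x_0\}$ and $X \setminus \{x_0\}$ are its only maximal proper subsets in $\H$---yields a projective $X$-tree $\T$, and a direct inspection of edge splits shows $\beta(\T) = \S$.

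The main obstacle is the careful bookkeeping in the surjectivity argument: one must break the inherent symmetry of splits by choosing a reference element $x_0$, augment the resulting family with $X$ and $\{x_0\}$ to match Definition~\ref{def:hierarchy}, and then suppress the necessarily degree-two root when passing from the rooted to the projective picture. Once these technical points are handled, both $\alpha(\T_r) = \H$ and $\beta(\T) = \S$ follow mechanically from the constructions.
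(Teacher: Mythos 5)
Your proof is correct, but it is worth noting that the paper does not actually prove this proposition: it is stated as ``a basic theorem of $X$-trees'' with citations to Buneman and to Semple--Steel, so any proof you give is necessarily your own route. What you have done, in effect, is assemble the paper's \emph{later} machinery into a self-contained proof: your surjectivity argument --- fix a reference element $x_0$, pass from the split system $\S$ to the set family $\{A : A\mid X\setminus A\in\S,\ x_0\notin A\}$, verify the hierarchy axioms from pairwise compatibility, invoke Proposition~\ref{prop:one}, and then unroot by suppressing the degree-two root --- is exactly the content of the maps $\gamma_r,\delta_r,\kappa_r,\lambda_r$ and the commuting square of Proposition~\ref{thm:rect1}, which the paper establishes only \emph{after} citing the present result. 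Your version is cleaner in one respect (you keep $\{x_0\}$ in the hierarchy over all of $X$ rather than working over $X\setminus\{x_0\}$), and the bookkeeping checks out: the maximal proper members of $\H$ are $\{x_0\}$ and $X\setminus\{x_0\}$, so the root really does have degree two, and the edge splits of the suppressed tree are precisely $\S$. Two minor caveats. First, your injectivity step is only a sketch: the cherry characterization and the contraction induction are standard but you should say what the induced split system of the contracted tree is and why it determines the smaller tree. Second, as literally stated with Definition~\ref{def:rootedXTree} (which allows internal vertices of degree two), $\beta$ is \emph{not} injective --- subdividing an internal edge changes the tree but not its split system --- so both your cherry argument and the proposition itself implicitly assume the usual nondegeneracy condition that every internal vertex has degree at least three; this is an imprecision inherited from the paper, but you should state the hypothesis you are using. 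Your surjectivity construction does produce a tree satisfying it, since every non-root internal vertex of the tree built from a hierarchy has at least two children.
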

Finally we show that pairwise compatible split systems and hierarchies are in bijection.
Fix $r\in X$ and let $\mathcal{S}$ be a set of pairwise compatible splits
of $X$.
The \emph{unrooting map} $\gamma_r$ sends a split $S$ to the component of $S$ that does not contain $r$.
The \emph{rooting map} $\delta_r$ sends a set $A \subseteq X \setminus \{r\}$ to the split $\delta_r(A) = X | X \setminus A$.
If $\S$ is a split system, let $\gamma_r(\S)=\{\gamma_r(S)| S \in \S\}$.
\begin{prop}
$\mathcal{S}$ is a pairwise compatible split system over $X$ if and only if $\gamma_r(\S)$ is a hierarchy over $X \setminus \{r\}$.
\end{prop}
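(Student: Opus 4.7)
The plan is to prove this directly by translating each of the defining conditions of a hierarchy into a corresponding condition on the split system, using the distinguished element $r$ to break the symmetry inherent in each split.

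For the forward direction, I would first check condition (1) of Definition \ref{def:hierarchy}: since $\S$ contains every trivial split, $\gamma_r$ sends $\{r\}|X\setminus\{r\}$ to $X \setminus \{r\}$ and sends each $\{x\}|X\setminus\{x\}$ with $x \neq r$ to $\{x\}$, so $\gamma_r(\S)$ automatically contains the ambient set and every singleton of $X \setminus \{r\}$. For condition (2), given two splits $S_1 = A_1|B_1$ and $S_2 = A_2|B_2$ in $\S$, I would relabel so that $r \in B_1 \cap B_2$, so that $\gamma_r(S_i) = A_i$. The intersection $B_1 \cap B_2$ is then automatically nonempty, so compatibility forces one of the remaining three intersections $A_1 \cap A_2$, $A_1 \cap B_2$, $B_1 \cap A_2$ to be empty. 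Since $A_1, A_2 \subseteq X \setminus \{r\}$, these three possibilities translate respectively to $A_1 \cap A_2 = \emptyset$, $A_1 \subseteq A_2$, and $A_2 \subseteq A_1$ --- exactly the three alternatives encoded by $A_1 \cap A_2 \in \{\emptyset, A_1, A_2\}$.

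The reverse direction runs the same argument backwards using $\delta_r$: if $\gamma_r(\S)$ is a hierarchy, then for any two splits $S_1, S_2 \in \S$ (normalized so $r$ lies in both $B$-sides), the hierarchy condition on $A_1 = \gamma_r(S_1)$ and $A_2 = \gamma_r(S_2)$ yields one of the three relations above, each of which certifies the emptiness of one of $A_1 \cap A_2$, $A_1 \cap B_2$, $B_1 \cap A_2$, hence the compatibility of $S_1$ and $S_2$. Moreover $\gamma_r \circ \delta_r$ and $\delta_r \circ \gamma_r$ are identities on subsets of $X \setminus \{r\}$ and on splits of $X$ respectively, so the two directions combine into the stated equivalence.

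The main obstacle is purely bookkeeping rather than conceptual: each pair of splits has four possible intersections, one of which is automatically occupied by $r$ once the sides are normalized, and the remaining three must be matched one-to-one with the three cases $\emptyset$, $A_1$, $A_2$ of the hierarchy condition. Once this case analysis is set up cleanly, the proof is essentially immediate.
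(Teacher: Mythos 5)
Your proof is correct and follows essentially the same route as the paper's: a direct case analysis matching the compatibility condition on splits to the intersection condition on the sets $\gamma_r(S_i)$. The only difference is organizational --- you normalize so that $r \in B_1 \cap B_2$ and then ask which of the remaining three intersections is empty, whereas the paper normalizes the empty intersection to be $A_1 \cap A_2$ and then cases on the location of $r$; your version is arguably a touch cleaner, and you also explicitly verify condition (1) of Definition \ref{def:hierarchy}, which the paper leaves implicit.
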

\begin{proof}
Choose $S_1, S_2 \in \mathcal{S}$, with $S_i = A_i | B_i$. If $S_1,
S_2$ are compatible,
then without loss of generality  $A_1 \cap A_2 = \emptyset$. If $r \in
B_1,B_2$, then $\gamma_r(S_i) = A_i$, so $\gamma_r(S_1) \cap
\gamma_r(S_2) = \emptyset$.
If $r \in A_1$, then $r \not \in A_2$ since $A_1 \cap
A_2 = \emptyset$, and therefore $r \in B_2$. In this case $\gamma_r(S_1) = X \setminus
A_1$ and $\gamma_r(S_2) = A_2$, so $\gamma_r(S_1) \cap
\gamma_r(S_2) = A_2 = \gamma_r(S_2)$, again satisfying the hierarchy
condition. The final case follows by symmetry. Conversely, suppose $\gamma_r(\S)$ is a hierarchy.
Then for any two $S_1, S_2 \in \S$ we have $\gamma_r(S_1) \cap \gamma_r(S_2) \in \{\emptyset, \gamma_r(S_1), \gamma_r(S_2)\}$.
Checking the cases as above shows that $S_1$ and $S_2$ must
then be compatible splits.
\end{proof}
We define the map $\kappa_r$ to take an affine $X \setminus \{r\}$-tree $\T$ to the projective $X$-tree obtained by attaching a vertex with label $r$ to the root of $\T$.
The inverse map $\lambda_r$ takes a projective $X$-tree $\T$ to the affine $X$-tree as follows: let $v$ be the vertex of $\T$ labelled $r$. 
Then $\lambda_r(\T)$ is obtained by rooting at the neighbor of $v$, and then deleting $v$.
\begin{prop}\label{thm:rect1}
If AT and H are the sets of all affine trees and
hierarchies over $X \setminus \{r\}$, respectively, and PT and PSS are the
sets of all projective trees and pairwise compatible splits
systems over $X$, respectively, then the following diagram commutes:
\[
\xymatrix{
AT  \ar@<.5ex>[d]^\alpha  \ar@<.5ex>[r]^{\kappa_r}  & PT \ar@<.5ex>[d]^\beta \ar@<.5ex>[l]^{\lambda_r} \\
H   \ar@<.5ex>[r]^{\delta_r} \ar@<.5ex>[u] & PSS \ar@<.5ex>[l]^{\gamma_r} \ar@<.5ex>[u] }
\]
Each arrow is a bijection; the unlabelled arrows are the inverses of the maps going in the other direction.
\end{prop}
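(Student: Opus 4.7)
The plan is to split the statement into three self-contained verifications and then extract all bijectivity and commutativity claims from them. Since $\alpha$ is a bijection by Proposition \ref{prop:one}, $\beta$ is a bijection by Proposition \ref{thm:splitsequiv}, and the previous proposition shows $\gamma_r$ sends pairwise compatible split systems to hierarchies, it suffices to verify: (a) $\kappa_r$ and $\lambda_r$ are mutually inverse; (b) $\gamma_r$ and $\delta_r$ are mutually inverse; and (c) a single commutativity relation, say $\beta \circ \kappa_r = \delta_r \circ \alpha$. Once (a)--(c) are in hand, standard diagram-chasing (inverting a pair of arrows in a commuting square) yields the three remaining commutativities and forces every arrow to be a bijection.

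Step (a) is an unwinding of the definitions. If $\T$ is an affine tree on $X \setminus \{r\}$ rooted at $\rho$, then $\kappa_r(\T)$ is obtained by adding an edge from $\rho$ to a new vertex labelled $r$; applying $\lambda_r$ then re-roots at the neighbour of $r$, namely $\rho$, and deletes $r$, recovering $\T$. The reverse composition $\kappa_r \circ \lambda_r$ is similarly mechanical. Step (b) is a direct computation: for $A \subseteq X \setminus \{r\}$, $\gamma_r(\delta_r(A)) = A$ because $r$ lies on the complementary side of the split $X \mid X \setminus A$; and for a split $S = A \mid B$ of $X$ with $r \in B$, $\delta_r(\gamma_r(S)) = \delta_r(A) = S$.

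The substance of the argument lies in (c). I would set up a bijection between the vertices $v$ of $\T$ and the edges $e_v$ of $\kappa_r(\T)$ as follows: a non-root vertex $v$ is matched with the edge joining $v$ to its parent, while the root $\rho$ is matched with the newly added edge joining $\rho$ to $r$. Under this matching, deleting $e_v$ in $\kappa_r(\T)$ separates the descendants of $v$, which form $H_v \subseteq X \setminus \{r\}$, from the remaining vertices, and the remaining component contains $r$. Hence the resulting $X$-split equals $\delta_r(H_v)$, which gives $\beta(\kappa_r(\T)) = \{\delta_r(H_v) : v \in V(\T)\} = \delta_r(\alpha(\T))$.

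The only real obstacle is bookkeeping in (c): one must verify that the indexing of $\alpha(\T)$ by vertices of $\T$ and of $\beta(\kappa_r(\T))$ by edges of $\kappa_r(\T)$ line up as a genuine bijection of sets rather than a mere containment. Once the vertex-to-parent-edge correspondence above is pinned down (with the root singled out and associated to the newly added edge to $r$), the identification $H_v \leftrightarrow \delta_r(H_v)$ is immediate and the proposition reduces to inspection.
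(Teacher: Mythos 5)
Your proposal is correct and follows exactly the route the paper intends: it assembles Propositions \ref{prop:one} and \ref{thm:splitsequiv} with the $\gamma_r$/$\delta_r$ correspondence, checks that $\kappa_r$ and $\lambda_r$ are mutually inverse, and verifies one commutativity relation via the vertex-to-parent-edge matching (with the root matched to the new edge to $r$), which is precisely the content the paper leaves to the reader. No gaps; the bookkeeping you flag in (c) is handled correctly by your correspondence $v \mapsto e_v$.
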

\section{PQ-trees}\label{sec:PQ}
We start our generalization of Proposition \ref{thm:rect1} with a generalization of rooted $X$-trees.
\begin{defn}\label{def:PQtree}
A {\em PQ-tree} over $X$ is a rooted $X$-tree in which every vertex comes equipped with a linear ordering on its children.  Every internal vertex of degree three or less is labeled a P-vertex, and every internal vertex of degree four or more is labeled either as a P-vertex or a Q-vertex. 
We say two PQ-trees $\T_1, \T_2$ are equivalent (we write $\T_1 \sim \T_2$) if one can be obtained from the other
by a series of moves consisting of:
\begin{enumerate}
\item Permuting the ordering on the children of a P-vertex,
\item Reversing the ordering on the children of a Q-vertex.
\end{enumerate}
\end{defn}
We draw a PQ-tree by representing P-vertices as circles and Q-vertices as squares,
and ordering the children of a vertex from left to right as per the
corresponding linear order (see Figure 3).
\begin{figure}[!h]\label{fig3}
\begin{center}
\includegraphics[scale = 0.5]{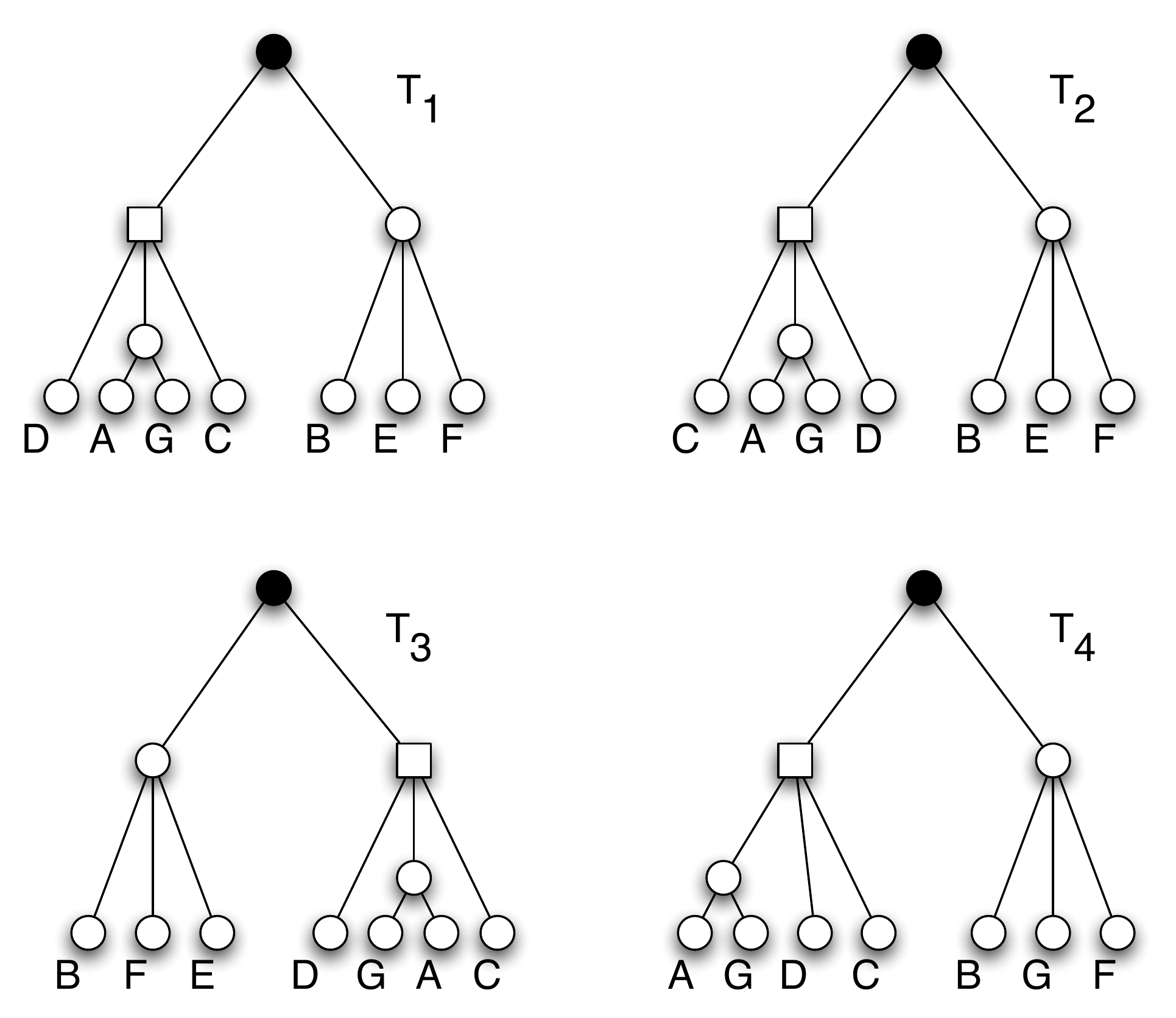}
\end{center}
\caption{
Four PQ trees. $T_1,T_2$ and $T_3$ are equivalent to each other but $T_4$ is different from the other three.
}
\end{figure}
For any PQ-tree $\T$ over $X$, define the
$\textit{frontier}$ of the tree as the linear ordering on $X$ derived from
reading the leaves of $T$ from left to right. Let $con(\T) =
 \{frontier(\T')| \T' \sim \T\}$ be the set of all linear
orderings $\prec$ that are consistent with the PQ structure of $\T$. 
We say $I$ is an \emph{interval} with respect to 
$\prec$ if there exist $a,b \in X$ such that $I = \{ t | a
\preceq t \preceq b\}$.
Define $\alpha$ to be the map that sends the PQ-tree $\T$ to the set of all $I \subseteq X$ such
that $I$ is an interval with respect to every linear ordering in $con(A)$.
\begin{lemma}\label{lem:1}
$\alpha(\T)$ is a hierarchy if and only if every vertex of $\T$ is a P-vertex. 
If so, let $\T'$ be the corresponding normal affine $X$-tree. Then
$\alpha(\T) = \alpha(\T')$, where $\alpha(\T')$ is the hierarchy constructed in Proposition \ref{prop:one}.
\end{lemma}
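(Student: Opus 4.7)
The plan is to characterize $\alpha(\T)$ explicitly in each regime: when every vertex is a P-vertex I will show $\alpha(\T) = \{L_v : v \in V(\T)\}$, where $L_v$ denotes the set of leaves descending from $v$, and when a Q-vertex is present I will exhibit two sets in $\alpha(\T)$ whose intersection violates the hierarchy condition. The starting observation in both directions is that for every vertex $v$ the set $L_v$ belongs to $\alpha(\T)$, since each equivalence move is local at a single vertex and therefore preserves the contiguity of the leaves under any subtree.

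For the direction in which every vertex is a P-vertex, fix $I \in \alpha(\T)$ and let $v$ be the lowest common ancestor of the leaves of $I$. I claim $I = L_v$. The case $|I| = 1$ is immediate, so assume $v$ is internal with children $c_1, \ldots, c_k$, and set $J = \{j : L_{c_j} \cap I \neq \emptyset\}$, so $|J| \geq 2$ by the LCA property. Suppose for contradiction $I \subsetneq L_v$. Either every $L_{c_j}$ with $j \in J$ lies in $I$, in which case some $m \notin J$ exists, and freely permuting the children of the P-vertex $v$ so that $c_m$ is placed between two $J$-children gives an ordering in which $L_{c_m}$ separates two blocks of $I$; or some $L_{c_{j_0}}$ contains both an $a \in I$ and a $b \notin I$, in which case I pick $j_1 \in J \setminus \{j_0\}$ and $c \in L_{c_{j_1}} \cap I$, permute at $v$ to place $c_{j_0}$ immediately before $c_{j_1}$, and permute inside the subtree at $c_{j_0}$ so that $a$ precedes $b$, producing an ordering where $I$ contains $a$ and $c$ but not the intermediate $b$. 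In both cases $I$ fails to be an interval, contradicting $I \in \alpha(\T)$. Hence $\alpha(\T) = \{L_v : v \in V(\T)\}$, and this is precisely the hierarchy $\alpha(\T')$ produced by Proposition \ref{prop:one}.

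For the converse, suppose some vertex $v$ is a Q-vertex; by Definition \ref{def:PQtree} it has degree at least four and hence at least three children $c_1, c_2, c_3$. Set $A = L_{c_1} \cup L_{c_2}$ and $B = L_{c_2} \cup L_{c_3}$. Every PQ-tree equivalent to $\T$ lists the children of $v$ either in the original order or its reverse, so $A$ and $B$ are intervals in each consistent ordering and both lie in $\alpha(\T)$; but $A \cap B = L_{c_2}$ is a proper nonempty subset of each, violating condition (2) of Definition \ref{def:hierarchy}. The main obstacle is the final step in the hard direction, namely that the all-P subtree at $c_{j_0}$ admits an ordering with $a$ before $b$; this is handled by descending to the LCA of $a$ and $b$ inside that subtree, which is itself a P-vertex, and permuting its children so that the subtree containing $a$ is placed before the subtree containing $b$, with the remaining P-vertex orderings chosen arbitrarily.
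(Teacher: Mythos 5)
Your proof is correct and follows essentially the same route as the paper: show every leaf set $L_v$ lies in $\alpha(\T)$, show that in the all-P case these are the only members (hence $\alpha(\T)$ is exactly the hierarchy of Proposition~\ref{prop:one}), and in the presence of a Q-vertex exhibit $L_{c_1}\cup L_{c_2}$ and $L_{c_2}\cup L_{c_3}$ as incompatible members. The only difference is that you supply the LCA/permutation argument for the step the paper merely asserts, namely that every interval common to all frontiers of an all-P tree must equal some $L_v$.
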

\begin{proof}
Let $v$ be an internal vertex of $T$, $\{c_1, c_2, \dots, c_n\}$ 
the set of its children, and recall $H_v = \{x \in X | v_x \preceq v\}$ 
is the set of all the elements $x$ such that the path from $v_x$ to the root 
includes $v$. Now $H_{c_1} \cup \cdots \cup H_{c_n}$ is in $\alpha(\T)$, 
and if every vertex of $A$ is a $P$-vertex then every element of $\alpha(\T)$ 
will be of this form. In this case $\alpha(\T)$ is identical to 
the hierarchy constructed in Proposition \ref{prop:one}. Now suppose $T$ has a $Q$-vertex $v$. 
Then $n \geq 3$ and both $A=H_{c_1}\cup H_{c_2}$ and $B=H_{c_2} \cup H_{c_3}$ 
are in $\alpha(\T)$, but $A \cap B = H_{c_2}$ is nonempty, 
so $\alpha(\T)$ is not a hierarchy. 
\end{proof}
This shows that the map $\alpha$ on PQ-trees agrees with the $\alpha$
in Proposition \ref{thm:rect1}. 
It also shows that the usual affine $X$-trees are precisely PQ-trees with all P-vertices. Since PQ-trees do not necessarily give rise to hierarchies,
we seek a different combinatorial characterization of them.
\begin{defn}\label{def:pyrHierarchy}
A collection of subsets $\P$ of $X$ is a \emph{prepyramid} if
\begin{enumerate}
\item $X \in \P$ and $\{x\} \in \P$ for all $x \in X$,
\item There exists a linear ordering $\prec$ on $X$ such that every $A \in \P$ is an interval with respect to $\prec$. 
\end{enumerate}
$\P$ is a \emph{pyramid} if, in addition, it is closed under intersection.
\end{defn}
If $\T$ is a PQ-tree then $\alpha(\T)$ is a prepyramid with respect
to any $\prec \in frontier(\T)$, and $\alpha(\T)$ is the \emph{associated prepyramid}. 
\begin{defn}\label{def:rootedFam}
Two subsets $A,B$ of $X$ are $\emph{compatible}$ if $A \cap B \in \{\emptyset, A, B\}$. Otherwise they are \emph{incompatible}
A $\textit{rooted family}$ over $X$ is a collection of sets $\F$ such that
if $A,B \in \F$ are incompatible, then $A\cap B, A \setminus B,B \setminus A$, and $A \cup B$ are in $\F$.
\end{defn}
We are now ready to state the main result of this section.
\begin{prop}\label{thm:PQconstruct}
The map $\alpha$ is a bijection from PQ-trees to prepyramids that are rooted families.
\end{prop}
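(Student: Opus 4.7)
The plan is to verify four claims: (1) $\alpha(\T)$ is a prepyramid, (2) $\alpha(\T)$ is a rooted family, (3) $\alpha$ is injective, and (4) $\alpha$ is surjective onto prepyramid rooted families. Claim (1) is essentially by construction: any frontier of $\T$ serves as a witnessing linear ordering, and $X$ together with the singletons $\{x\} = H_{\phi(x)}$ all lie in $\alpha(\T)$.

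For (2) I would first prove the explicit description that the elements of $\alpha(\T)$ are exactly the sets $H_v$ for vertices $v \in V$, together with the consecutive unions $H_{c_i} \cup H_{c_{i+1}} \cup \cdots \cup H_{c_j}$ where $c_1, \dots, c_{n_q}$ are the ordered children of some Q-vertex $q$. Under a P-vertex any proper consecutive union can be broken apart by permuting the children, while under a Q-vertex only reversal is allowed and preserves consecutiveness. With this description in hand, two elements of $\alpha(\T)$ are incompatible precisely when both are consecutive unions of children beneath a common Q-vertex whose index ranges overlap without nesting; closure under $\cap$, $\cup$, and set-difference then reduces to the obvious closure of overlapping subintervals of $\{1, \dots, n_q\}$.

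Claim (3) follows by observing that the sub-family of $\alpha(\T)$ consisting of sets compatible with every other element, which I will call the \emph{modules}, is exactly $\{H_v : v \in V\}$. Its Hasse diagram under inclusion recovers the underlying rooted tree by Proposition \ref{prop:one}, after which the presence or absence of non-trivial consecutive-union elements at each vertex distinguishes Q-vertices from P-vertices and fixes the child ordering up to reversal.

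The main obstacle is (4). Given a prepyramid rooted family $\P$ with associated linear order $\prec$, let $\H \subseteq \P$ be the sub-family of modules. Pairwise compatibility makes $\H$ a hierarchy containing $X$ and all singletons, so Proposition \ref{prop:one} produces a rooted $X$-tree $\T_\H$. At each internal vertex $v$ with children $c_1 \prec \cdots \prec c_n$, the maximal modules strictly inside $H_v$, any $A \in \P$ with $A \subsetneq H_v$ and $A$ not contained in any single $H_{c_i}$ must be a union of consecutive $H_{c_i}$'s: since each $H_{c_i}$ is a module, $A \cap H_{c_i} \in \{\emptyset, A, H_{c_i}\}$, forcing $A$ to be a union of full $H_{c_i}$'s, and the interval property of $A$ under $\prec$ makes the chosen children consecutive. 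I would then declare $v$ a P-vertex if no such non-trivial consecutive union lies in $\P$ and a Q-vertex otherwise, ordering its children by $\prec$; the degree condition on Q-vertices is automatic. The delicate final step, which is the true crux, is to verify that at a Q-vertex \emph{every} non-trivial consecutive union actually appears in $\P$: if one were missing, combining rooted-family closure with the maximality of the $H_{c_i}$ as modules inside $H_v$ lets one exhibit a consecutive union of children that is compatible with all of $\P$, yielding a module strictly between some $H_{c_i}$ and $H_v$ and contradicting maximality. Combined with the description in step (2), this gives $\alpha(\T) = \P$.
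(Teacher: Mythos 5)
Your overall architecture is sound and, modulo one step, would work: characterizing $\alpha(\T)$ explicitly as the sets $H_v$ together with consecutive unions of children of Q-vertices, and recovering the tree from the sub-family of ``modules'' (sets compatible with everything), is a legitimate alternative to the paper's induction on $|\F|$, and your reduction of closure and injectivity to that structure theorem is correct. But there is a genuine gap exactly where you flag ``the true crux'' in step (4), and you do not fill it. You assert that if some nontrivial consecutive union $H_{c_i}\cup\cdots\cup H_{c_j}$ beneath a Q-vertex were missing from $\P$, then ``combining rooted-family closure with the maximality of the $H_{c_i}$'' would exhibit a consecutive union compatible with all of $\P$. That sentence is essentially a restatement of the goal in contrapositive form: what must be shown is precisely that a prepyramid rooted family over the ordered blocks $H_{c_1},\dots,H_{c_n}$ which contains at least one nontrivial proper interval and has no nontrivial proper modules must contain \emph{all} intervals. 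Nothing in your sketch indicates how the missing interval produces the module.

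This is where the paper spends roughly half of its proof: it argues by induction on $n$, taking a maximal proper element $A$, using its incompatibility with some $B$ and the rooted-family operations to force $A\cup B=X$, then running a delicate case analysis on $B\setminus A$ to pin down $A=x_{[1:n-1]}$ (and symmetrically $x_{[2:n]}$), and finally restricting to $x_{[1:n-1]}$ after verifying that the restricted family again has no nontrivial modules so that the inductive hypothesis applies. None of these steps is automatic, and the verification that the restricted family inherits the ``every element is incompatible with something'' property itself requires an argument. Until you supply a proof of this closure statement (either by your module-from-missing-interval route, which you would need to make concrete, or by an induction in the style of the paper), surjectivity of $\alpha$ is not established. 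The remaining pieces of your plan --- the explicit description of $\alpha(\T)$ in step (2), which also deserves a short proof that any common interval of all frontiers is a union of full children subtrees of its least common ancestor --- are comparatively routine.
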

\begin{proof}
If $\T$ is a PQ-tree, $\alpha(\T)$ is obviously a prepyramid. We now show it is also a rooted family.
Trivially $X \in \alpha(\T)$ and $\{x\} \in \alpha(\T)$ for all $x \in X$. 
Let $A,B$ be incompatible sets in $\F$ and $\prec \in con(A)$ a linear ordering.
We can write $A=\{t|x_A \preceq t \preceq y_A\}$ and $B = \{t|x_B \preceq t \preceq
y_B\}$ for some $x_A,x_B,y_A,y_B \in X$, and by incompatibility we can assume 
 $x_A \prec x_B \preceq y_A \prec y_B$. Then $A \cap B = \{t|x_B
\preceq t \preceq y_A\}$, $A \cup B = \{t|x_A \preceq t
\preceq y_B\}$, $A \setminus B = \{t| x_A \preceq t \prec x_B \}$ and $B \setminus A = \{t| y_A \prec t \preceq y_B\}$.
Since each of these four sets is an interval with respect to $\prec$ for every $\prec \in con(A)$ they
are each in $\alpha(\T)$.

It remains to show that if $\F$ is a collection of subsets of $X$ which is a prepyramid and a rooted family,
then there exists a unique PQ-tree $\T$ such that $\alpha(\T) = \F$.
Let $\F' \subseteq \F$ consist of the sets in $\F$ that are compatible with all of $\F$.
Then the elements of $\F'$ are pairwise compatible, so $\F'$ is a hierarchy and corresponds to a tree $\T'$, as in Proposition \ref{prop:one}. 
Consider the vertices $v_C$ in $T'$ corresponding to subsets of the form $C=A \cup B$ with $A, B \in \F$ incompatible, 
and mark those vertices as $Q$-vertices.
For each such $v_C$ there is a natural ordering on its children $v_{C_1}, v_{C_2}, \ldots, v_{C_n}$ as follows: 
$v_{C_i} < v_{C_j}$ if the labels of the leaves in the subtree rooted at $v_{C_i}$ 
are all $\prec$ the labels of the leaves of the subtree rooted at $v_{C_j}$,
where $\prec$ is the order from the rooted family condition.
Ordering the children of the $Q$-vertices of $T'$ in this way,
we obtain a PQ-tree $\T$.

We will use induction on $|\F|$ to show $\alpha(\T)=\F$ and that $\T$ is the only such PQ-tree for which this is true.
First, suppose $\F'$ contains a set $A \neq X$, $|A|>1$.
Define $\F_1 = \{C \in \F | A \subseteq C \mathrm{\ or\ } A \cap C = \emptyset\}$ and $\F_2 = \{C \in \F | A \supseteq C\}$. 
$\F_1 \cap \F_2 = A$, and because $A$ is compatible with everything $\F_1 \cup \F_2 = \F$.
$\F_1$ is a prepyramid over $(X \setminus A) \cup \{A\}$and $\F_2$ is a prepyramid over $A$, and both are rooted families 
with $|\F_i| < |\F|$. Then the inductive hypothesis shows there are PQ-trees $\T_1, \T_2$ such that 
$\alpha(\T_i)=\F_i$ for $i=1,2$. Now $\T_1$ has a leaf corresponding to $A$, and the root of $\T_2$ also corresponds to $A$.
Grafting $\T_2$ onto the leaf $A$ in $\T_1$ gives a PQ-tree $\T$ with $\alpha(\T) = \alpha(\T_1) \cup \alpha(\T_2) = \F$.
Conversely, if $\T$ is a PQ-tree with $\alpha(\T)=\F$, 
$\T$ must have a node $v$ such that the subtree $\T_1$ with root $v$ satisfies $\alpha(\T_1)=\F_1$,
and the tree $\T_2$ obtained by replacing the subtree $\T_1$ with the vertex $v$ and label $A$
gives $\alpha(\T_2)=\F_2$. The inductive hypothesis gives the uniqueness of $\T_1$ and $\T_2$, 
which in turn implies the uniqueness of $\T$.

Now suppose no such set $A$ exists, so $\F'=\{\{x_1\},\ldots,\{x_n\},X\}$. If $\F=\F'$ then $\F$ is a hierarchy
and the PQ-tree of depth one with root a P-vertex
is the unique tree such that $\alpha(\T)=\F$. We now consider the final case, where $\F$ contains sets 
other than $X$ and the $\{x\}$s, and every element in $\F$ except for these is incompatible
with another element of $\F$. Without loss of generality  assume $x_1 \prec x_2 \prec \cdots \prec x_n$, where $\prec$
is the ordering under which $\F$ is a prepyramid. Then every element of $\F$ 
is of the form $x_{[i:j]}:=\{x_i, x_{i+1}, \cdots, x_j\}$ for $1 \leq i \leq j \leq n$. 
Let $\T$ be the PQ-tree of depth $1$ with a Q-vertex root and children
$x_i$. Since $\alpha(\T) = \{x_{[i:j]} | 1 \leq i \leq j \leq n\}$ it's clear $\alpha(\T) \supseteq \F$.

We must show $\F = \alpha(\T)$, or equivalently, that $x_{[i:j]} \in \F$ for all $i < j$.
We will prove this by induction on $n$. It's obvious for $n=3$, so suppose $n\geq 4$.
Let $A \neq X$ be a set in $\F$ that is maximal under inclusion.
By assumption $|A| > 1$ and $A$ is incompatible with some $B \in \F$. By the rooted family
condition $A \cup B \in \F$ so we must have $A \cup B = X$ by the 
maximality of $A$. Without loss of generality  we may write $A = x_{[1:j]}$,
$B = x_{[i:n]}$ with $i \leq j$. We claim $j = n-1$.
For by the rooted family condition $\F$ contains
$A \setminus B = x_{[1:i-1]}$, $A \cap B = x_{[i:j]}$ and $B \setminus A = x_{[j+1:n]}$.
If $j \neq n-1$, $B \setminus A$ is incompatible with some other set $C \in \F$.
$C$ must be of the form $C = x_{[k_1:k_2]}$ with $k_1 < j+1 \leq k_2 < n$.
If $k_1=1$ then $C \supset A$ contradicting the maximality of $A$.
Then $k_1\neq 1$, so $A$ and $C$ are incompatible
and $A \cup C = x_{[1:k_2]} \in \F$, contradicting the maximality of $A$.
Thus $j=n-1$ and $x_{[1:n-1]} \in \F$. By the same reasoning, $x_{[2:n]} \in \F$.

Now let $\G = \{A \in \F | A \subseteq x_{[1:n-1]}\}$.
Since $x_{[1:n-1]} \in \G$, $\G$ is a prepyramid over $X \setminus \{x_n\}$.
It is also a rooted family. We claim that every element $A \in \G$ with
$A \neq \{x_i\}, A \neq X  \setminus  \{x_n\}$ is incompatible with some other element of $\G$.
To see this, write $A = x_{[i:j]}$, $j \leq n-1$ and suppose $j \neq n-1$. 
By our initial assumption $A$ is incompatible with some set 
$B = x_{[i':j']} \in \F$. If $j' \neq n$ then $B \in \G$ gives the incompatible set.
Otherwise $B$ and $x_{[1:n-1]}$ are incompatible, so $B \cap x_{[1:n-1]} = x_{[i':n-1]}$
is in $\G$ and is incompatible with $A$.
Finally, suppose $j = n-1$.
Then $A$ is incompatible with $x_{[n-1:n]}$, so $A \cup x_{[n-1:n]} = x_{[i:n]} \in \F$.
This must be incompatible with some other set $B \in \F$.
We can write $B=x_{[i':j']}$, $i \leq j' \leq n-1$, so  
$B \in \G$ is incompatible with $A$ and the claim is proved.

It follows that $\G$ is a rooted prepyramid over $X \setminus \{x_n\}$,
and every element in $\G$ other than $X$ and the $\{x_i\}$s is 
incompatible with some other element of $\G$. 
By our inductive hypothesis, $x_{[i:j]} \in \G$
for each $1 \leq i \leq j \leq n-1$.
We showed that $x_{[n-1:n]}\in \F$, and for each $i$ the sets
$x_{[i:n-1]}$  and $x_{[n-1:n]}$ are incompatible,
so their union $x_{[i:n]}$ is in $\F$.
Thus $\F = \{x_{[i:j]}|1 \leq i \leq j \leq n\}$.
\end{proof}
\section{PC-trees}
We now describe the projective equivalent of PQ-trees.
\begin{defn}\label{def:PCtree}
A \emph{PC-tree} over $X$ is an $X$-tree where each internal vertex 
comes equipped with a circular ordering of its neighbors. Additionally, each internal vertex
of degree less than $4$ is labelled a P-vertex, and each other vertex is labelled
either a P-vertex or a C-vertex (Figure 4).
Two PC-trees $\T_1, \T_2$ are said to be equivalent (we write $\T_1 \sim \T_2$)  if one can be obtained from the other by a series of the following moves:
\begin{enumerate}
\item Permuting the circular ordering of the neighbors of a P-vertex,
\item Reversing the circular ordering on the neighbors of a C-vertex.
\end{enumerate}
\end{defn}
\begin{figure}\label{fig4}
\begin{center}
\includegraphics[scale=0.5]{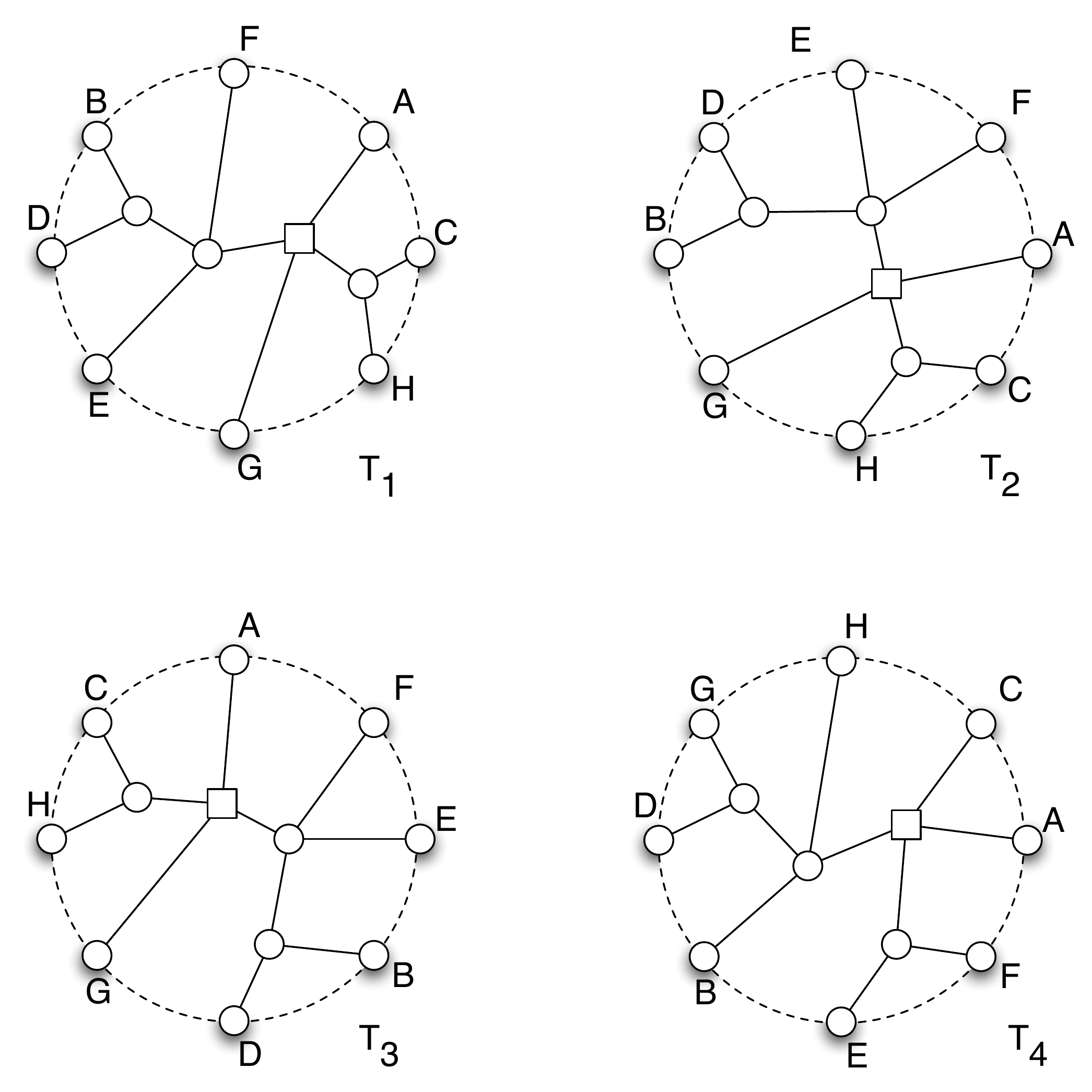}
\end{center}
\caption{Four PC-trees. $T_1,T_2$ and $T_3$ are
  equivalent to each other but $T_4$ is different from the other
  three.}
\end{figure}

For a PC-tree $\T$, let $\textit{frontier}(\T)$ be the circular
ordering given by reading the taxa in either a clockwise or
counterclockwise direction. Let $con(\T) = \{frontier(\T') | \T' \sim \T\}$ and
let  $\beta(\T) = \cap_{\C \in con(\T)} \S(\C)$.
We say $\beta(\T)$ is the circular split system \emph{associated} to $\T$. 
\begin{defn}
A split system $\S$ is an \emph{unrooted split family over $X$} if, for each pair
of incompatible splits $S_1 =A_1|B_1, S_2=A_2|B_2$ in $\S$,
the splits $A_1\cap A_2 | B_1 \cup B_2, A_1 \cap B_2 | A_2 \cup B_1,
A_2 \cap B_1 | A_1 \cup B_2$, and $B_1 \cap B_2 | A_1 \cup A_2$
are all in $\S$ as well.
\end{defn}
Following the proof in Proposition \ref{thm:PQconstruct} that $\alpha(\T)$ is a rooted family for every PQ-tree $\T$, one can show $\beta(\T)$ is an unrooted family for every PC-tree $\T$.
\begin{lemma}\label{lem:2}
For any PC-tree $\T$, $\beta(\T)$ is an unrooted split family.
\end{lemma}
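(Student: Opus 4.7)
The plan is to establish the closure property one circular ordering at a time. Since $\beta(\T)=\bigcap_{\C \in con(\T)} \S(\C)$, it suffices to show that whenever two incompatible splits $S_1=A_1|B_1$ and $S_2=A_2|B_2$ both lie in $\S(\C)$ for a single circular ordering $\C$, the four merged splits $A_1 \cap A_2 | B_1 \cup B_2$, $A_1 \cap B_2 | A_2 \cup B_1$, $B_1 \cap A_2 | A_1 \cup B_2$, and $B_1 \cap B_2 | A_1 \cup A_2$ also lie in $\S(\C)$. This parallels the interval argument from the proof of Proposition \ref{thm:PQconstruct}, except that intervals of a linear ordering are now replaced by arcs of a convex polygon.

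For the single-ordering step I would argue geometrically. Write $\C=(x_1,\dots,x_n)$ and identify each split in $\S(\C)$ with a chord of the convex $n$-gon $P_n$, each side of which is a contiguous arc of vertices. Both $A_1$ and $A_2$ are arcs, and incompatibility of $S_1$ and $S_2$ forces all four of $A_1 \cap A_2,\ A_1 \cap B_2,\ B_1 \cap A_2,\ B_1 \cap B_2$ to be nonempty, which is precisely the statement that the two chords cross. Two crossing chords partition $P_n$ into four regions whose vertex sets are exactly these four pairwise intersections, and each such region is itself a single contiguous arc. Hence each of the four merged splits has one side equal to a single arc of $\C$, so each lies in $\S(\C)$.

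The main obstacle is merely circular index bookkeeping. Up to rotation and reflection one may take $A_1=\{x_i,\dots,x_{j-1}\}$ and $A_2=\{x_k,\dots,x_{l-1}\}$ with the cyclic inequality $i<k<j<l$ forced by incompatibility; one then reads off $A_1 \cap A_2=\{x_k,\dots,x_{j-1}\}$, $A_1 \setminus A_2=\{x_i,\dots,x_{k-1}\}$, $A_2 \setminus A_1=\{x_j,\dots,x_{l-1}\}$, and $B_1 \cap B_2=\{x_l,\dots,x_{i-1}\}$ (indices mod $n$), confirming that all four pieces are arcs of $\C$. Since this holds for every $\C \in con(\T)$, the four merged splits all lie in $\beta(\T)=\bigcap_{\C \in con(\T)} \S(\C)$, as required. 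Note that no specific structural fact about PC-trees beyond the definition of $\beta$ enters the argument; the content is entirely in the plane geometry of crossing chords.
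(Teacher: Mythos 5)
Your proof is correct and is essentially the argument the paper intends: the paper leaves Lemma~\ref{lem:2} as an adaptation of the rooted-family part of Proposition~\ref{thm:PQconstruct}, and your write-up carries out exactly that adaptation, replacing intervals of a linear ordering by arcs of a circular ordering and noting that the four merged splits are arcs for every $\C \in con(\T)$, hence lie in the intersection $\beta(\T)$. The crossing-chords picture and the index bookkeeping are both sound, so nothing further is needed.
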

We now generalize the map $\gamma_r$ to PQ- and PC-trees.
\begin{defn}
The \emph{unrooting map} $\kappa_r$ sends a PQ-tree $\T$ over $X
\setminus \{r\}$ to the PC-tree $\kappa_r(\T)$ as follows: attach a vertex labelled $r$ to the root of $\T$. 
If vertex $v$ in $\T$ has children $\{v_1,v_2,\ldots,v_k\}$ with linear ordering $v_1 \prec v_2 \prec \ldots \prec v_k$ and parent $w$, in $\kappa_r(\T)$
the vertex has the same neighbors with circular ordering $\{v_1,\ldots,v_k,w\}$. 

The \emph{rooting map} $\lambda_r$ sends a PC-tree $\T$ over $X$ to the 
PQ-tree over $X \setminus \{r\}$ obtained by rooting at the vertex adjacent to $r$, 
deleting $r$, and replacing each $C$ vertex with a $Q$ vertex. 
Let $v$ be such a vertex with a circular ordering $\C = \{v_1, \ldots, v_m\}$; 
we may assume the path from $v$ to the root passes through $v_m$. 
Then in $\lambda_r(\T)$, vertex $v$ has parent $v_m$ and children $v_1,\ldots,v_{m-1}$ with linear ordering $v_1 \prec v_2 \prec \ldots \prec v_{m-1}$.
\end{defn}
Since $\kappa_r$ and $\lambda_r$ are inverses, this immediately gives the following:
\begin{prop}\label{lem:klBij}
$\kappa_r$ is a bijection from PQ-trees over $X \setminus \{r\}$ to PC-trees over $X$.
\end{prop}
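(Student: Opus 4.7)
The plan is to verify directly that $\kappa_r$ and $\lambda_r$ are well-defined maps between the two classes of trees and are mutually inverse; the bijection then follows. Since both constructions are defined by explicit local rules at each vertex, the proof reduces to checking a short list of compatibilities, and the authors' phrase ``$\kappa_r$ and $\lambda_r$ are inverses'' suggests this is meant to be a short verification rather than an essential argument.

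First I would verify that $\kappa_r$ produces a valid PC-tree. Attaching a pendant vertex labeled $r$ to the root of a PQ-tree $\T$ gives an $X$-tree, and at each internal vertex the linear order on children, together with the parent edge (or with $r$ at the root), assembles into a circular ordering of neighbors. P-vertices remain P-vertices and Q-vertices become C-vertices; the only degree change occurs at the root, whose degree rises by one when $r$ is attached, and this is still compatible with the PC-tree degree/label rules of Definition~\ref{def:PCtree}. The verification for $\lambda_r$ is symmetric: rooting at the unique neighbor of the leaf $r$ and removing $r$ restores a rooted tree, the circular ordering at each internal vertex is cut at the parent edge to recover a linear ordering of children, and C-vertices become Q-vertices.

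Next I would check that the compositions $\lambda_r \circ \kappa_r$ and $\kappa_r \circ \lambda_r$ act as the identity. This is essentially bookkeeping: $\kappa_r$ adds a pendant $r$ and inserts it into the ``parent slot'' of the root's circular ordering, which $\lambda_r$ then removes by rooting at the unique neighbor of $r$ and cutting the cycle exactly at the position where $r$ sits. Each vertex's local data and label is restored in both directions.

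The main subtlety, and the only real obstacle, is that PQ-trees and PC-trees are to be interpreted up to the equivalence relations in Definitions~\ref{def:PQtree} and~\ref{def:PCtree}, so one must check that $\kappa_r$ and $\lambda_r$ descend to these equivalence classes. For this, observe that permuting the linear order at a P-vertex of the PQ-tree corresponds to permuting the circular order at the corresponding P-vertex of the PC-tree (the parent or the slot containing $r$ is fixed, and all remaining rotations of the neighbors are realized), while reversing the linear order at a Q-vertex matches reversing the circular order at the corresponding C-vertex; conversely, any allowed move on a PC-tree lifts back to an allowed move on a PQ-tree once the cut edge to the parent (or to $r$) is fixed. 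With this compatibility established, the pointwise inverse-pair property proved above immediately yields that the induced maps on equivalence classes are mutually inverse bijections, proving the proposition.
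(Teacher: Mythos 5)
Your proposal is correct and follows exactly the route the paper takes: the paper's entire proof is the one-line remark that $\kappa_r$ and $\lambda_r$ are inverses, and you simply carry out the routine verification (well-definedness, the two compositions being the identity, and compatibility with the P/Q- and P/C-equivalence moves) that the authors leave implicit. If anything you are more careful than the paper, in particular in checking that the maps descend to equivalence classes.
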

Recall that if $S$ is a split of $X$, the map $\gamma_r$ takes $S$ to the component of $S$ not containing $r$.
\begin{prop}\label{lem:3}
Let $CUF$ be the set of circular split systems that are unrooted families over $X$,
and let $PP$ be the set of prepyramids that are rooted families over $X \setminus \{r\}$.
Then the map $\gamma_r$ is a bijection from $CUF$ to $PP$.
\end{prop}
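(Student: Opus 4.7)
The plan is to mirror the argument of Proposition \ref{thm:rect1}, which bijects pairwise compatible split systems with hierarchies via $\gamma_r$, while tracking the additional closure conditions: $CUF$ is to $PP$ what pairwise compatible split systems are to hierarchies, and the unrooted family property is to the rooted family property in the same way. I will establish four things in order: (a) $\gamma_r(\S)$ is a prepyramid; (b) $\gamma_r(\S)$ is a rooted family; (c) the map $\delta_r$ from Section 2 is a two-sided inverse; (d) $\delta_r(\P)$ is a circular unrooted family for each $\P \in PP$.

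For (a), I take a circular ordering $\C=(y_1,\ldots,y_n)$ of $X$ witnessing $\S \subseteq \S(\C)$, choose coordinates so that $y_n = r$, and use the induced linear order $y_1 \prec \cdots \prec y_{n-1}$ on $X \setminus \{r\}$. Every split in $\S(\C)$ consists of two circular arcs, exactly one of which contains $r$; removing $r$ turns the other arc into an interval of $\prec$, which is exactly $\gamma_r$ of the split. The trivial splits $\{r\}|X\setminus\{r\}$ and $\{x\}|X\setminus\{x\}$ supply $X\setminus\{r\}$ and each singleton $\{x\}$, $x \neq r$, as required.

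For (b), let $A, B \in \gamma_r(\S)$ be incompatible, with preimages $S_A = A|X\setminus A$ and $S_B = B|X\setminus B$, so that $r \in X\setminus A$ and $r \in X\setminus B$. The three sets $A\cap B$, $A\setminus B$, $B\setminus A$ witnessing subset-incompatibility are three of the four quadrants of the pair $(S_A, S_B)$, while the fourth quadrant $(X\setminus A)\cap(X\setminus B)$ contains $r$ and is therefore nonempty; hence $S_A$ and $S_B$ are incompatible as splits. Applying the unrooted family property to $S_A, S_B$ produces four splits whose sides not containing $r$ are precisely $A\cap B$, $A\setminus B$, $B\setminus A$, and $A\cup B$, verifying the rooted family condition.

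For (c) and (d), $\delta_r$ is visibly a two-sided inverse of $\gamma_r$ on splits and on subsets. To check $\delta_r(\P) \in CUF$, append $r$ after the maximum of the linear order witnessing $\P$ to form a circular order $\C$ on $X$; each interval of $\prec$ becomes one arc of a split in $\S(\C)$, and the trivial splits of $X$ are present. The unrooted family closure follows by running the argument of (b) in reverse: incompatible splits in $\delta_r(\P)$ come from incompatible subsets in $\P$ (discard the side containing $r$), the rooted family property supplies $A\cap B$, $A\cup B$, $A\setminus B$, $B\setminus A \in \P$, and lifting via $\delta_r$ gives the four splits demanded by the unrooted family property. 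The only delicate point is the bookkeeping of which quadrant of a split pair contains $r$; once one observes that the quadrant $(X\setminus A)\cap(X\setminus B)$ is always that $r$-side, the rooted and unrooted conditions match up term-for-term under $\gamma_r$ and $\delta_r$, which is what drives the whole correspondence.
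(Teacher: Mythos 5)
Your proof is correct and follows essentially the same route as the paper: show $\gamma_r(\S)$ is a prepyramid via the induced linear order, translate incompatibility of subsets into incompatibility of splits, match the four closure sets quadrant-by-quadrant, and reverse the argument for $\delta_r$. Your observation that the fourth quadrant $(X\setminus A)\cap(X\setminus B)$ always contains $r$ and is hence nonempty is a slightly cleaner way to get split-incompatibility than the paper's case analysis by contradiction, but the substance is the same.
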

\begin{proof}
$\delta_r$ and $\gamma_r$ are inverses, so it suffices to show that 
$\gamma_r(CUF) \subseteq PRF$ and $\delta_r(PRF) \subseteq CUF$. Let $\S$ be a circular unrooted split family
with circular ordering $\{x_1,\ldots,x_n\}$ and suppose $r = x_i$. Then $\gamma_r(\S)$ is a prepyramid
with respect to the linear ordering on $X \setminus \{r\}$ given by 
$x_{i+1} \prec x_{i+2} \prec \cdots \prec x_{i-1}$.

Next, consider $\S\in CUF$ and two incompatible sets $G,H \in \gamma_r(\S)$,
with $G = \gamma_r(S_1)$ and $H = \gamma_r(S_2)$. Assume $S_1= A_1|B_1$, 
$S_2 = A_2 | B_2$ are compatible as split systems
with $A_1 \cap A_2 = \emptyset$. If $r \in
A_1$, then $r \in B_2$ and $G\cap H = (X-A_1) \cap A_2 = A_2 = H$,
contradicting the incompatibility of $G$
and $H$. The other cases produce similar contradictions, so $S_1$ and
$S_2$ must be incompatible as split systems. Then the splits
$A_1 \cap A_2 | B_1 \cup B_2, A_1 \cap B_2 | A_2 \cup B_1, A_2 \cap B_1 | A_1 \cup B_2$
and $B_1 \cap B_2 | A_1 \cup A_2$ are all in $\S$.
Assume without loss of generality  that $G = A_1, H = A_2$. Then
$\gamma_r(\{A_1\cap A_2, B_1\cup B_2\}) = A_1 \cap A_2 = G \cap H \in \gamma_r(\S)$,
and similarly $G \cap H, G \setminus H, H \setminus G$ are all in $\gamma_r(\S)$,
so $\gamma_r(\S)$ is a rooted family.

Conversely, let $\F$ be a pyramidal, rooted family over $X \setminus \{r\}$ with linear ordering $\prec$. 
$\delta_r(\F)$ is circular and contains all the trivial splits, as $\{x\} | X \setminus \{x\} = \gamma_r(\{x\})$ and $\{r\} | X \setminus \{r\} = \gamma_r(X \setminus \{r\})$. 
The above argument reverses to show $\delta_r(G), \delta_r(H) \in \delta_r(\F)$ are incompatible as split systems only if $G$ and $H$ are incompatible as sets.
In this case $\delta_r(G \cap H), \delta_r(G \cup H), \delta_r(G \setminus H)$ and $\delta_r(H \setminus G)$ are in $\delta_r(\F)$ so $\delta_r(\F)$ 
is an unrooted family. Thus, $\delta_r(PRF) = CUF$ and $\gamma_r(CUF) = PRF$ as required.
\end{proof}
The above propositions combine to show that the map $\kappa_r \circ \alpha^{-1} \circ \gamma_r$ is a bijection from circular split systems to PC-trees, 
and that in fact for a PC-tree $\T$, $\kappa_r \circ \alpha^{-1} \circ \gamma_r(\T)$ is precisely the split system arising from $\T$ in the natural way. 
An example is shown in Figure 5. We thus have:
\begin{prop}\label{thm:rect2}
The following diagram commutes:
\[ \xymatrix{
PQ  \ar@<.5ex>[d]^\alpha  \ar@<.5ex>[r]^{\kappa_r}  & PC \ar@<.5ex>[d]^\beta \ar@<.5ex>[l]^{\lambda_r} \\
PRF   \ar@<.5ex>[r]^{\delta_r} \ar@<.5ex>[u] & CUF \ar@<.5ex>[l]^{\gamma_r} \ar@<.5ex>[u]
} \]
where $PQ$ is the set of PQ-trees over $X \setminus \{r\}$, PRF is the set of prepyramids that are rooted families over $X \setminus \{r\}$, 
$PC$ is the set of PC-trees over $X$, and $CUF$ is the set of circular unrooted families over $X$.
\end{prop}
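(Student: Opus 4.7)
The plan is to leverage the three bijections already established: $\alpha: PQ \to PRF$ from Proposition~\ref{thm:PQconstruct}, $\kappa_r: PQ \to PC$ from Proposition~\ref{lem:klBij}, and $\gamma_r: CUF \to PRF$ from Proposition~\ref{lem:3}. Once the square is shown to commute, bijectivity of the fourth arrow $\beta$ (and hence of its inverse, the map $CUF \to PC$) follows automatically from the bijectivity of the other three. So the entire content of the proposition reduces to verifying one commutativity identity, which I will take to be
\[
\gamma_r \circ \beta \circ \kappa_r \;=\; \alpha.
\]

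To prove this identity I would first set up the correspondence of orderings. Fix a PQ-tree $\T$ over $X \setminus \{r\}$ and let $\T' = \kappa_r(\T)$. By construction, the circular ordering on the neighbors of each internal vertex of $\T'$ is obtained from the linear ordering of its children in $\T$ by appending the parent direction (or, at the former root, by inserting the new leaf $r$). A routine induction on the depth of $\T$ then shows that $\sigma \mapsto $ ``cut $\sigma$ at $r$ and read around the remainder'' gives a bijection
\[
con(\T') \;\longleftrightarrow\; con(\T),
\]
and that the equivalence moves on $\T'$ (permuting at a P-vertex, reversing at a C-vertex) are carried to the corresponding moves on $\T$ (permuting at a P-vertex, reversing at a Q-vertex). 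Under this bijection, a subset $A \subseteq X \setminus \{r\}$ is an interval of a linear ordering in $con(\T)$ if and only if $A$ is an arc not containing $r$ of the corresponding circular ordering in $con(\T')$.

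With this in hand, unpacking the definitions yields the identity. A split $S = A \,|\, B \in \beta(\T')$ means $A$ and $B$ are both arcs of every $\mathcal C \in con(\T')$, and $\gamma_r(S)$ is whichever of $A,B$ does not contain $r$. So $\gamma_r(\beta(\T'))$ consists of exactly those $A \subseteq X \setminus \{r\}$ which appear as the non-$r$ arc in every circular ordering of $\T'$. By the previous paragraph, these are exactly the subsets which are intervals in every linear ordering of $con(\T)$, i.e.\ exactly $\alpha(\T)$. This establishes $\gamma_r \circ \beta \circ \kappa_r = \alpha$.

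The main obstacle is the bookkeeping in the ordering-correspondence step: one must check carefully that the $\kappa_r$/$\lambda_r$ operations (which insert or delete $r$, and convert between Q- and C-vertices) really do identify the equivalence classes of linear and circular orderings generated by the allowed moves. This is essentially a matter of tracking what happens at the root vertex and at the former-Q-now-C-vertex along the root path, and is straightforward but must be done carefully. Once this is in place, the commutativity identity above yields bijectivity of the remaining arrow, completing the proof.
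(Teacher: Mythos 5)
Your proposal is correct and follows essentially the same route as the paper, which likewise obtains the result by combining the three previously established bijections ($\alpha$, $\kappa_r$, $\gamma_r$) and asserting that $\kappa_r\circ\alpha^{-1}\circ\gamma_r$ inverts $\beta$. The only difference is that the paper states the commutativity identity without argument, whereas you actually verify it via the cut-at-$r$ correspondence between $con(\kappa_r(\T))$ and $con(\T)$ — a worthwhile elaboration, but not a different proof.
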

\begin{figure}\label{fig5}
\begin{center}
\includegraphics[scale=0.5]{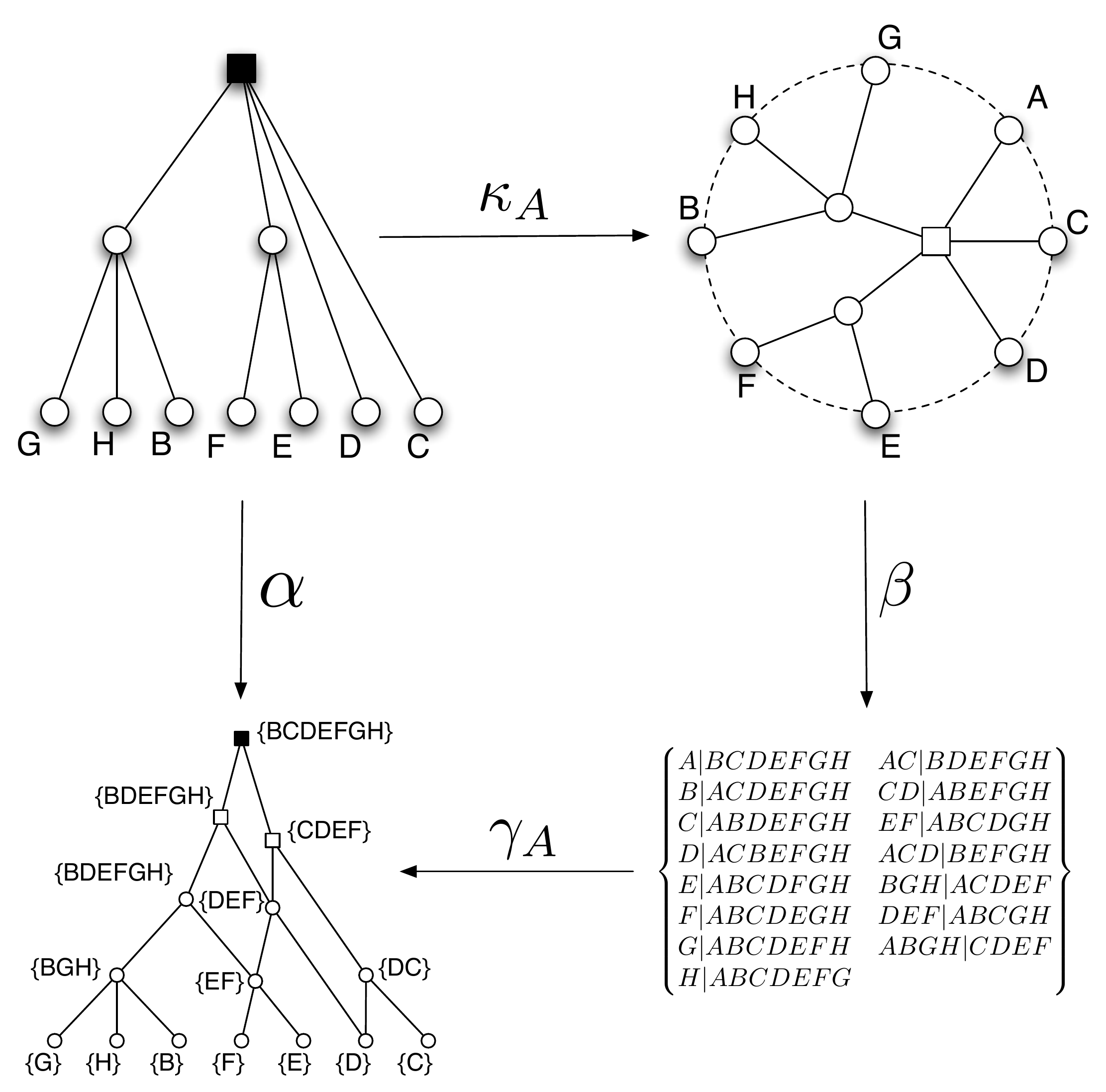}
\end{center}
\caption{
An instance of Proposition \ref{thm:rect2}.}
\end{figure}

\newpage
\section{Metrics Realized By PC-trees}\label{sec:metrics}
In this section we extend the tree metric theorem (Theorem
\ref{thm:fourpoint}) by showing how to replace affine and projective
trees with PQ- and PC-trees. To explain our approach, we recall
that in Section 2 we reviewed the connection between trees and their
representations as hierarchies and split systems:
\[ \xymatrix{
AT  \ar@<.5ex>[d]^\alpha  \ar@<.5ex>[r]^{\kappa_r}  & PT \ar@<.5ex>[d]^\beta \ar@<.5ex>[l]^{\lambda_r} \\
H   \ar@<.5ex>[r]^{\delta_r} \ar@<.5ex>[u] & PSS \ar@<.5ex>[l]^{\gamma_r} \ar@<.5ex>[u]
} \]
This correspondence can be extended to metrics as follows \cite{Semple2003}:
\[
\xymatrix{
AT \ar@<.5ex>[d]^\alpha  \ar@<.5ex>[r]^{\kappa_r}  & PT \ar@<.5ex>[d]^\beta \ar@<.5ex>[l]^{\lambda_r} \\
H   \ar@<.5ex>[r]^{\delta_r} \ar@<.5ex>[u] & PSS \ar@<.5ex>[l]^{\gamma_r} \ar@<.5ex>[u] \\
U \ar@<.5ex>[u] \ar@<.5ex>[r] & TM \ar@<.5ex>[u] \ar@<.5ex>[l] }\]
Here $U$ are ultrametrics and $TM$ are tree metrics. The tree metric
theorem is proved by diagram chasing: starting with a tree metric, the
Gromov product is applied (see Definition \ref{def:gromov} below),
resulting in an ultrametric. A (unique) hierarchy representing the
ultrametric can be obtained and then the PSS corresponding to the
hierarchy can be derived by the unrooting map $\delta_r$. This weighted PSS
represents the tree metric. 

In this section we construct a PC-tree that best realizes a Kalmanson metric by a similar approach, 
constructing an analog of the above diagram with suitable PQ- and PC-tree counterparts (Theorem \ref{thm:rectangle}). 
The extension requires some care, because the weighted PC-trees representing a Kalmanson metric may require extra zero splits. 
A key result (Theorem \ref{thm:bestfitPC}) is that there is a unique PC-tree that minimally represents any Kalmanson metric.

We begin by making precise the notion of a Kalmanson metric.
\begin{defn}\label{def:kalmanson}
A dissimilarity map $\D$ is $\emph{Kalmanson}$ if there is a circular ordering $\{x_1,\ldots,x_n\}$ such that for for all $i<j<k<l$,
\begin{equation}\label{eq:kal}
\max\{\D(x_i,x_j) + \D(x_k,x_l), \D(x_l,x_i) + \D(x_j,x_k)\} \leq \D(x_i,x_k) + \D(x_j,x_l).
\end{equation}
\end{defn}
Let $\T$ be a projective $X$-tree and $\C$ a circular ordering obtained by reading the taxa of $\T$ clockwise. If $D$ is $\T$-additive then $D$ is Kalmanson with respect to $\C$. Additionally, in this case one actually has equality in \eqref{eq:kal} for each $i<j<k<l$. Kalmanson metrics are thus generalizations of tree metrics obtained by relaxing the equality conditions of the four-point theorem. 
The following theorem, proved in \cite{Chepoi1998}, gives the Kalmanson metric equivalent
of the four-point condition.
\begin{thm}\label{thm:circSplitsEquiv}
A metric $\D$ satisfies the Kalmanson condition if and only if
there exists a circular split system $\S$ and weight function $w: \S \to \mathbf{R}^+$
such that $\displaystyle \D = \sum_{S\in \S} w(S) \D_S$.
If it does, the decomposition is unique.
\end{thm}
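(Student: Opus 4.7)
The plan is to prove the two implications and then derive uniqueness.

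For the \emph{if} direction, I would show directly that each circular split pseudometric $\D_S$ satisfies the Kalmanson inequality with respect to its circular ordering $\C$. Since $S$ corresponds to a chord of the convex $n$-gon whose endpoints lie in two of the arcs determined by $\C$, a short case analysis on the placement of these endpoints relative to any four points $x_i \prec x_j \prec x_k \prec x_l$ shows that $\D_S(x_i,x_k)+\D_S(x_j,x_l)$ always dominates both $\D_S(x_i,x_j)+\D_S(x_k,x_l)$ and $\D_S(x_l,x_i)+\D_S(x_j,x_k)$; indeed, one obtains equality when the chord endpoints lie in non-opposite super-arcs and a strict inequality otherwise. Closure of the Kalmanson inequality under non-negative linear combinations then completes this direction.

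For the \emph{only if} direction, given a Kalmanson metric $\D$ with ordering $\C = \{x_1, \ldots, x_n\}$, I would write down candidate weights explicitly. For each $S_{i,j} \in \S(\C)$ (indices cyclic mod $n$), set
\[
w(S_{i,j}) = \tfrac{1}{2}\bigl(\D(x_{i-1},x_{j-1}) + \D(x_i,x_j) - \D(x_{i-1},x_j) - \D(x_i,x_{j-1})\bigr).
\]
Non-negativity of $w(S_{i,j})$ is exactly the Kalmanson inequality applied to the quadruple $x_{i-1} \prec x_i \prec x_{j-1} \prec x_j$ (or the triangle inequality in the degenerate trivial case, where the formula becomes a Gromov product). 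The key step is then to verify $\D = \sum_{S \in \S(\C)} w(S)\,\D_S$ on every pair $(x_k,x_l)$. For this I would use a telescoping argument: the splits $S_{i,j}$ that separate $x_k$ from $x_l$ form a combinatorial rectangle of index pairs, and the alternating signs in the defining formula for $w$ cause all interior contributions to cancel, leaving exactly $\D(x_k,x_l)$ on the right-hand side. Taking $\S$ to be the subset of $\S(\C)$ on which $w$ is strictly positive gives the desired decomposition.

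Uniqueness is essentially built into the formula, which expresses each weight as a fixed linear functional of $\D$: any two decompositions of the same Kalmanson metric must therefore assign identical weights to each split. Equivalently, one can verify that $\{\D_S : S \in \S(\C)\}$ is linearly independent by evaluating any putative linear relation $\sum c(S)\D_S = 0$ against the quadruple $x_{i-1},x_i,x_{j-1},x_j$ defining the formula for $S_{i,j}$, which isolates $c(S_{i,j}) = 0$.

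The main obstacle is the global verification $\D = \sum_S w(S)\D_S$: the weights are extracted from purely local four-point data, so showing they reassemble $\D$ correctly on long-range pairs $(x_k,x_l)$ requires careful bookkeeping. I expect this step to be mechanical but delicate, and I would organize it by induction on the circular distance from $x_k$ to $x_l$, with the base case handling adjacent pairs (where only trivial split contributions survive) and the inductive step peeling off one layer of the rectangular index set at a time using the Kalmanson inequality.
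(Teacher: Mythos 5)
Your two implications follow the paper's proof almost verbatim: the same case analysis on split pseudometrics for the ``if'' direction, and the same weight formula $w(S_{i,j}) = \tfrac{1}{2}\bigl(\D(x_{i-1},x_{j-1}) + \D(x_i,x_j) - \D(x_{i-1},x_j) - \D(x_i,x_{j-1})\bigr)$ with a telescoping/coefficient-grouping verification for the ``only if'' direction. (One small quibble there: the reassembly $\D = \sum_S w(S)\D_S$ is a purely algebraic identity --- the coefficient of $\D(x_k,x_l)$ in the expanded sum is $\delta_{ik}\delta_{jl}$ --- so the Kalmanson inequality plays no role in that bookkeeping, only in the non-negativity of the weights.)

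The genuine gap is in uniqueness. Your argument --- that the weights are fixed linear functionals of $\D$, equivalently that $\{\D_S : S \in \S(\C)\}$ is linearly independent --- only establishes uniqueness of the decomposition \emph{within a single circular split system} $\S(\C)$. But a Kalmanson metric can satisfy the Kalmanson condition with respect to two distinct circular orderings $\C_1, \C_2$, and the theorem's uniqueness claim must rule out two genuinely different decompositions supported on $\S(\C_1)$ and $\S(\C_2)$ respectively. Your linear-independence argument cannot be extended to the union of two circular systems: the full set of split pseudometrics on $n$ points has cardinality $2^{n-1}-1$ and is far from linearly independent, and even for a split $S$ lying in both systems, the ``fixed linear functional'' extracting its weight depends on the ordering used to index it. The paper closes this gap explicitly: if $S_{i,j} \in \S(\C_1) \setminus \S(\C_2)$, one finds a quadruple that is cyclic in opposite senses for the two orderings, so the two Kalmanson inequalities sandwich $\D(x_i,x_j)+\D(x_k,x_l)$ and $\D(x_i,x_k)+\D(x_j,x_l)$ into equality; feeding that equality back through the ``if''-direction identity forces $w(S_{i,j}) = 0$. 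Hence every positively weighted split is common to both orderings and the decompositions agree. You need some version of this argument to complete the proof.
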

\begin{proof}
Suppose $\D = \displaystyle \sum_{S\in \S} w(S) \D_S$ for some
split system $\S$ that is compatible with respect to a circular ordering $\C=\{x_1,x_2,\ldots,x_n\}$.
Choose $i < j < k < l$ and $S = A|B \in \S$. One can check that 
\begin{equation}\label{eq:kalcond}
D_S(x_i, x_k) + D_S(x_j, x_l) - D_S(x_i,x_j) - D_S(x_k,x_l) =
\begin{cases}
2 & x_i,x_j \in A, x_k,x_l \in B, \\
0 & \mathrm{otherwise}.
\end{cases}
\end{equation}
so $D$ satisfies the Kalmanson condition. 

Conversely, assume $\D$ is Kalmanson with respect to the circular ordering $\{x_1,\ldots,x_n\}$. Define
\[
2 \alpha(i,j) =\D(x_i, x_j) + \D(x_{i-1},x_{j-1}) - \D(x_i,x_{j-1}) - \D(x_{i-1},x_j).
\]
The Kalmanson condition shows this is non-negative. 

Recall that $S_{i,j} := \{x_i, x_{i+1}, \ldots, x_{j-1}\} | \{x_j, x_{j+1}, \ldots, x_{i-1}\}$.
The system $\S = \{S_{i,j}\}_{i<j}$ is clearly circular. We claim 
\begin{equation}\label{eq:claim}
\D = \displaystyle \sum_{i < j} \alpha(i,j) \D_{S_{i,j}}.
\end{equation}
To see this, rewrite the right hand side of \eqref{eq:claim}, expanding the $\alpha(i,j)$ and grouping together the coefficients of each $D(x_i,x_j)$. 
This gives $D = \sum_{i < j} \D(x_i, x_j) c_{i,j}$ with
\begin{equation}\label{eq:cij}
2 c_{i,j} = \D_{S_{i,j}} + \D_{S_{i+1,j+1}} - \D_{S_{i+1,j}} - \D_{S_{i,j+1}}.
\end{equation}
Now $c_{i,j}(x_k,x_l) = \delta_{ik}\delta_{jl}$. 
This proves the correctness of \eqref{eq:claim} and thus shows that $D$ comes from a weighted circular split system.

For a circular ordering $\C$ there are $n \choose 2$ splits in $\S$
and by \eqref{eq:cij} the dimension of metrics that are Kalmanson with respect to $\C$
is also $n \choose 2$, so for a fixed circular ordering the weighting is unique.
Now suppose $\D$ is Kalmanson with respect to two distinct circular orderings $\C_1, \C_2$.
Let $\S_i$ be the split system given by $\C_i$, and consider the decomposition $\displaystyle \D = \sum_{k<l} \alpha(k,l) \D_{S_{k,l}}$ with respect to $\C_1$.
If $S_{i,j}$ is circular with respect to $\C_1$ but not with respect
to $\C_2$, then without loss of generality there exists some $k,l$ with $i < k < j < l$ such that $\{x_i, x_k, x_j, x_l\}$ is cyclic with respect to $\C_1$
and $\{x_i, x_j, x_k, x_l\}$ is cyclic with respect to $\C_2$. This implies
\[
\D(x_i, x_j) + \D(x_k, x_l) \geq \D(x_i,x_k) + \D(x_j, x_l) \geq \D(x_i, x_j) + \D(x_k, x_l),
\]
where the first inequality comes from the Kalmanson condition on $\C_1$ and the second comes from the Kalmanson condition on $\C_2$. So we have equality, and by \eqref{eq:kalcond},
\[
0 = D(x_i,x_j) + D(x_k,x_l) - D(x_i,x_k) - D(x_k,x_l) = 2 \sum_{\stackrel{S = A|B \in \S}{ik \in A, jl \in B}} w(S) \geq w(S_{i,j}),
\]
where the inequality follows since $S_{i,j}$ is in the summand. 
So $\alpha(i,j) = w(S_{i,j}) = 0$, and the only nonzero terms in the decomposition of $\D$ with respect to $\C_1$
correspond to splits in $\S_1$ which are also splits in $\S_2$.
This shows the decomposition of $D$ is unique, and thus the map $\nu$ from weighted circular split systems to Kalmanson metrics given by 
\[
\nu(\S,w)(x,y) = \sum_{S \in \S} w(S) D_S(x,y)
\]
is a bijection.
\end{proof}

Let $\xi = \nu^{-1}$ be the map that takes a Kalmanson metric to the weighted circular split system that describes it, 
and let $D$ be Kalmanson with $\xi(D) = (\S,w)$.
We want to find a PC-tree $\T$ such that $\beta(\T) = \S$ as this would provide a nice encapsulation of the ``treeness'' of our metric, but by Proposition \ref{thm:rect2}
such a tree exists if and only if $\S$ is an unrooted family, which is not necessarily the case. There is, however, a canonical best choice.
\begin{thm}\label{thm:bestfitPC}
Let $D$ be a Kalmanson metric. There is a unique PC-tree $\T$ and weighting function $w:\beta(\T) \to \mathbf{R}_{\geq 0}$ 
such that the weighted circular split system $(\beta(\T),w)$ gives rise to $D$, 
and such that the number of zero weights $|\{S \in \beta(\T) | w(S)=0\}|$ is minimal.
\end{thm}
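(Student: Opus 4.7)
The plan is to exploit the uniqueness in Theorem \ref{thm:circSplitsEquiv} and the bijection in Proposition \ref{thm:rect2} to reduce the question to a combinatorial one about closures of split systems. By Theorem \ref{thm:circSplitsEquiv}, $\D$ admits a unique decomposition $\D = \sum_{S \in \S_0} w_0(S) \D_S$ with $w_0 : \S_0 \to \mathbf{R}_{>0}$ strictly positive and $\S_0$ circular with respect to some ordering $\C$. If a weighted PC-tree $(\T, w)$ realizes $\D$, this uniqueness forces $\{S \in \beta(\T) : w(S) > 0\} = \S_0$ and $w|_{\S_0} = w_0$. Hence $\beta(\T) \supseteq \S_0$, and the number of zero weights equals $|\beta(\T) \setminus \S_0|$; minimizing this amounts to finding the smallest possible $\beta(\T)$ containing $\S_0$.

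By Lemma \ref{lem:2}, $\beta(\T)$ is a circular unrooted split family (CUF), and by Proposition \ref{thm:rect2} every CUF arises as $\beta(\T)$ for a unique PC-tree. A direct check from the definition shows that the intersection of any collection of unrooted split families is again an unrooted split family, so the intersection $\S^*$ of all unrooted split families containing $\S_0$ is well-defined; equivalently, $\S^*$ can be built constructively by iteratively adjoining the four splits forced by each incompatible pair until the process stabilizes. Once it is shown that $\S^*$ is itself circular with respect to $\C$, Proposition \ref{thm:rect2} produces the unique PC-tree $\T$ with $\beta(\T) = \S^*$; extending $w_0$ by zero to $\beta(\T) \setminus \S_0$ yields the minimizing weighting, and uniqueness of both $\T$ and $w$ follows from the bijectivity of $\beta$ together with the uniqueness of $\S^*$.

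The main obstacle is the circularity step: one must verify that the unrooted-family closure preserves circularity. Suppose $S_1, S_2 \in \S(\C)$ are incompatible, $S_i = A_i | B_i$. After rotating $\C$ one may assume $A_1 = \{x_1, \ldots, x_{j-1}\}$, and after possibly swapping $A_2$ with $B_2$, $A_2 = \{x_k, \ldots, x_{l-1}\}$ with $1 < k \leq j-1 < l-1 \leq n$ (incompatibility rules out containment or disjointness). Then $A_1 \cap A_2$, $A_1 \cap B_2$, $B_1 \cap A_2$, and $B_1 \cap B_2$ are respectively $\{x_k,\ldots,x_{j-1}\}$, $\{x_1,\ldots,x_{k-1}\}$, $\{x_j,\ldots,x_{l-1}\}$, and $\{x_l,\ldots,x_n\}$, each an arc with respect to $\C$, so the four splits added in one closure step remain in $\S(\C)$. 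Iterating, $\S^* \subseteq \S(\C)$. This argument is the projective analogue of the closure-under-incompatibility verification used in the proof of Proposition \ref{thm:PQconstruct}, and it completes the reduction.
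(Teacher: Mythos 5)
Your proof is correct and follows essentially the same route as the paper: form the unrooted-family closure of the unique positive-weight circular split system supplied by Theorem~\ref{thm:circSplitsEquiv}, invoke Proposition~\ref{thm:rect2} to obtain the unique PC-tree realizing that closure, and extend the weights by zero. The one place you go beyond the paper is the explicit check that the closure step preserves circularity (the paper asserts this ``by construction''), and that verification is sound apart from a harmless indexing slip: incompatibility ($B_1 \cap B_2 \neq \emptyset$) actually forces $l \leq n$ rather than the stated $l-1 \leq n$, which is what your computation of the four arcs implicitly uses.
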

\begin{proof}
Define the closure map $\iota: \S \to \bar\S$, where $\bar\S$ is constructed via the following algorithm:
\begin{algorithmic}
\STATE $\bar{\S} \gets \S$
\WHILE{$\bar\S$ contains a pair of incompatible splits $A_1|B_1, A_2|B_2$}
	\STATE $\bar{\S} \gets \bar{\S} \cup \{A_1 \cap A_2 | B_1 \cup B_2, A_1 \cap B_2 | A_2 \cup B_1, A_2 \cap B_1 | A_1 \cup B_2, B_1 \cap B_2 | A_1 \cup A_2\}$
\ENDWHILE
\end{algorithmic}
Since $X$ is finite the above algorithm must terminate. By construction $\bar\S$ is a circular split system and an unrooted family, and if $\S'$ is an unrooted family with $\S' \supseteq \S$, then we must also have $\S' \supseteq \bar\S$. By Proposition \ref{thm:rect2} there is a unique PC-tree $\T$ with $\beta(\T) = \bar\S$. We have shown that if $\T'$ is another PC-tree with $\beta(\T') \supseteq \S$, then $\beta(\T') \supset \beta(\T)$, so in a well-defined sense $\T$ is the ``best-fit'' PC-tree for $D$.
Let $\xi(D) = (\S, w)$ be the weighted circular split system corresponding to $D$ and let $\bar{w}$ be a weighting on $\bar\S$ given by extending $w$ as
\[
\bar{w}(S) = \begin{cases} w(S) & S \in \S, \\ 0 & S \in \bar{\S} - \S.\end{cases}
\]
Then $\nu(\bar\S, \bar{w})=D$ and if $(\S', w')$ is a weighted circular split system with $\xi((\S', w')) = D$, then $\S' \supset \bar\S$ and $w' = w$ on $\S$, $w' = 0$ on $\S'-\S$.
\end{proof}
We now explore how this construction looks on the affine side.
\begin{defn}\label{def:gromov}
Let $\D$ be a metric on $X$ and choose $r \in X$. 
The {\em Gromov product based at $r$} is defined by 
\begin{equation}\label{eq:gromovprod}
2 \phi_r\left(\D\right)(x,y) = \D(x,y) - \D(x,r) - \D(y,r)\ \ \forall x,y\in X \setminus \{r\}. 
\end{equation}
\end{defn}
The Gromov product is also known as the {\em Farris transform} \cite{Dress2007,Farris1972} in phylogenetics.
It is easy to check that the map
\[
\psi_r(R)(x,y) = 2 R(x,y) - R(x,x) - R(y, y).
\]
satisfies $\psi_r \circ \phi_r(D) = D$ and so is its inverse.
\begin{defn}\label{def:robinsonian}
A matrix $\R$ is \emph{Robinsonian} over $X$ if 
there exists a linear ordering $\prec$ of $X$ such that
\[
max\{\R(x,y),\R(y,z)\} \leq \R(x,z) \ \ \ \forall x \preceq y \preceq z.
\]
$\R$ is a \emph{strong Robinsonian matrix} if, in addition, for all $w \preceq x \preceq y \preceq z$,
\begin{align}
\label{eq:strob1} R(x,y) = R(w,y) \implies R(x,z) = R(w,z), \\
\label{eq:strob2} R(x,y) = R(x,z) \implies R(w,y) = R(w,z).
\end{align}
\end{defn}
In \cite{Christopher1996} it is shown that if $D$ is Kalmanson then $\phi_r(D)$ is a strong Robinsonian matrix. Here, we give a slightly more precise characterization of the image.
\begin{lemma}\label{lem:bonus}
Let $D$ be a Kalmanson dissimilarity map and $R = \phi_r(D)$. Then $R$ is a strong Robinsonian matrix with the following properties:
\begin{enumerate}
\item $R(x,y) \leq 0$ for all $x,y \in X$,
\item For every $w \preceq x \preceq y \preceq z$, 
\begin{equation}\label{eq:RtoK}
R(x,y) + R(w,z) \leq R(x,z) + R(w,y).
\end{equation}
Furthermore, $\phi_r$ is a bijection from Kalmanson dissimilarities to the space of these matrices.
\end{enumerate}
\end{lemma}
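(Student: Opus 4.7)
The plan is to verify properties (1) and (2) by direct substitution in the Gromov product formula, and to establish the bijection by showing that the inverse $\psi_r$ carries any strong Robinsonian matrix with properties (1) and (2) back to a Kalmanson dissimilarity. Since the strong Robinsonian part of the forward direction is already handled by \cite{Christopher1996}, the substantive work is the two new properties plus surjectivity of $\phi_r$ onto this class.

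For (1), Theorem \ref{thm:circSplitsEquiv} writes any Kalmanson $D$ as a non-negative combination of split pseudometrics; each split pseudometric satisfies the triangle inequality, and hence so does $D$. Therefore $2R(x,y) = D(x,y) - D(x,r) - D(y,r) \leq 0$. For (2), the Robinsonian linear order $\prec$ on $X \setminus \{r\}$ is obtained by cutting the Kalmanson circular order at $r$: if $r = x_i$, take $x_{i+1} \prec x_{i+2} \prec \cdots \prec x_{i-1}$. For $w \prec x \prec y \prec z$, these four points form a cyclic quadruple on the Kalmanson polygon with $r$ lying in the arc from $z$ back to $w$, so Kalmanson gives $D(x,y) + D(w,z) \leq D(x,z) + D(w,y)$. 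Substituting the Gromov product formula, the $D(\cdot,r)$ terms cancel in pairs on both sides, yielding precisely $R(x,y) + R(w,z) \leq R(x,z) + R(w,y)$.

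For the bijection, $\phi_r$ has an explicit inverse $\psi_r$ (with the convention $R(x,x) = -D(x,r)$ used to recover distances to $r$), so it suffices to show that $\psi_r(R)$ is Kalmanson with respect to the circular order $\{r, x_{i_1}, \ldots, x_{i_{n-1}}\}$ obtained by inserting $r$ between the $\prec$-maximum and the $\prec$-minimum. Non-negativity of $D := \psi_r(R)$ uses property (1) for pairs involving $r$ and the Robinsonian inequality $R(x,x), R(y,y) \leq R(x,y)$ for all other pairs. Expanding each Kalmanson four-point inequality via the $\psi_r$ formula leads, after cancellation, to one of three cases: for quadruples containing $r$, each inequality reduces to a Robinsonian inequality of the form $R(b,c) \leq R(a,c)$; for quadruples $a \prec b \prec c \prec d$ in $X \setminus \{r\}$, the $D(a,b) + D(c,d) \leq D(a,c) + D(b,d)$ half follows from the two Robinsonian comparisons $R(a,b) \leq R(a,c)$ and $R(c,d) \leq R(b,d)$, while the $D(a,d) + D(b,c) \leq D(a,c) + D(b,d)$ half is exactly property (2) applied with $w = a$, $x = b$, $y = c$, $z = d$.

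The main obstacle is simply the bookkeeping of this case analysis: the cancellations are mechanical but one must carefully track which hypothesis controls each four-point inequality, and verify along the way that symmetry and $D(x,x) = 0$ are also preserved by $\psi_r$. I would also note that the extra strong Robinsonian conditions \eqref{eq:strob1}--\eqref{eq:strob2} appear to play no role in surjectivity — they are automatic on the image of $\phi_r$ but unused by the inverse direction — and this should be confirmed to make clear that the characterization is tight.
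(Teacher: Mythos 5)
Your proposal is correct and follows essentially the same route as the paper: verify (1) and (2) by substituting the Gromov product into the triangle and Kalmanson inequalities, and obtain the bijection by checking that $\psi_r$ sends matrices in this class back to Kalmanson dissimilarities (your case analysis makes explicit what the paper dismisses as ``clearly satisfies the Kalmanson conditions''). The only cosmetic difference is that the paper does not cite \cite{Christopher1996} for strongness but derives it on the spot from the Robinsonian inequality together with \eqref{eq:RtoK} --- exactly confirming your closing observation that conditions \eqref{eq:strob1}--\eqref{eq:strob2} are redundant given Robinsonian plus property (2).
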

\begin{proof}
Suppose $D$ is Kalmanson with respect to the order $\{x_1,x_2,\ldots,x_n,r\}$ and $R = \phi_r(D)$. It is immediate from the definition of the Gromov product \eqref{eq:gromovprod} and the Kalmanson condition \eqref{eq:kal} that $\phi_r(D)$ satisfies the above conditions with linear ordering $x_1 \prec x_2 \prec \ldots \prec x_n$. For $w \preceq x \preceq y \preceq z$, 
\[
2(R(x,z) - R(x,y)) = D(x,z) + D(y,r) - D(x,y) - D(z,r) \geq 0,
\]
so $R(x,z) \geq R(x,y)$. Similarly $R(x,z) \geq R(y,z)$, so $R$ is Robinsonian. Now assume $R(x,y) = R(x,z)$. Then \eqref{eq:RtoK} gives $R(w,z) \leq R(w,y)$, and since $R$ is Robinsonian we also have the reverse inequality, so $R(w,z) = R(w,y)$. Similarly if $R(x,y) = R(w,y)$ then $R(x,z) = R(w,z)$, so $R$ is strong.

Conversely, let $R$ be a strong Robinsonian matrix satisfying \eqref{eq:RtoK}. Then $\psi_r(R)$ clearly satisfies the Kalmanson conditions. Also,
\[
\psi_r(x,y) = (R(x, y)-R(x,x)) + (R(x,y) - R(y,y)) \geq 0,
\]
and $\psi_r(x,x)=0$ for all $x,y \in X$. So $\psi_r(R)$ is a Kalamanson dissimilarity. The maps $\phi_r$ and $\psi_r$ are inverses, completing the proof.
\end{proof}
Therefore the image of $\phi_r$ consists of negative strong Robinsonian matrices satisfying a kind of four-point condition \eqref{eq:RtoK}.

Next we define the affine analogue of weighted circular split systems.
\begin{defn}\label{def:iph}
Let $\P$ be a pyramid. A function $f: \P \to \mathbf{R}$ is an \emph{indexing function} if $A \subset B \implies f(A) < f(B)$ for all $A, B \in \P$. We call $(\P,f)$ an \emph{indexed pyramid}.
\end{defn}
\begin{defn}\label{def:ml}
A subset $A \subseteq X$ is \emph{maximally linked} \cite{Bertrand2002}  with respect to a Robinsonian matrix $R$ if there exists $d \in \mathbf{R}$ such that $\R(x,y) \leq d$ for all $x,y \in A$,
and $A$ is maximal in this way. If $A$ is such a set, define the \emph{diameter} of $A$ to be $diam(A) = \max_{x,y \in A} R(x,y)$. 
\end{defn}
Let $\mathcal{M}(\R)$ denote the set of maximally linked sets with respect to Robinsonian matrix $\R$ and define the function $f:\MM(\R) \to \mathbf{R}$ by $f(A) = diam(A)$. 
\begin{prop}\label{prop:RtoIP}
The map $\tau: \R \to (\MM(\R),f)$ is a bijection from Robinsonian matrices to indexed prepyramids, and from strong Robinsonian matrices to indexed pyramids.
\end{prop}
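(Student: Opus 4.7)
The plan is to establish the prepyramid/pyramid structure directly from the Robinsonian and strong Robinsonian axioms, and then to exhibit an inverse map. The first observation is that every maximally linked set $A$ of a Robinsonian $\R$ is an interval with respect to $\prec$: if $x \prec y \prec z$ with $x,z \in A$ but $y \notin A$, then for any $u \in A$ the Robinsonian condition gives $\R(u,y) \leq \max\{\R(u,x), \R(u,z)\} \leq \mathrm{diam}(A)$, so $A \cup \{y\}$ also has diameter at most $\mathrm{diam}(A)$, contradicting maximality. This secures condition (2) of Definition \ref{def:pyrHierarchy}. Condition (1) is immediate: $X$ is maximal at level $\max \R$, and each singleton $\{x\}$ is maximal at level $\R(x,x)$ under the standing convention that the diagonal sits strictly below off-diagonal entries. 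The strict monotonicity $A \subsetneq B \Rightarrow f(A) < f(B)$ then follows because equality would let $B$ witness the non-maximality of $A$.

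To produce the inverse, given an indexed prepyramid $(\P,f)$ with linear order $\prec$, set $\R(x,y) := \min\{f(A) : x,y \in A, A \in \P\}$, which is well-defined because $X \in \P$. For $x \prec y \prec z$, every $A \in \P$ containing $\{x,z\}$ is an interval and hence also contains $y$, so the minimum defining $\R(x,z)$ is taken over a subset of those defining $\R(x,y)$ and $\R(y,z)$; this yields $\R(x,z) \geq \max\{\R(x,y),\R(y,z)\}$. One then checks that the maximally linked set of $\R$ containing a pair $\{x,y\}$ at level $\R(x,y)$ is precisely the smallest $A \in \P$ achieving the minimum, so $\tau(\R) = (\P,f)$; injectivity follows because $\R$ is recovered from $(\MM(\R),f)$ by this same formula.

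The substantive content is the equivalence, for Robinsonian $\R$, of the strong axioms \eqref{eq:strob1}, \eqref{eq:strob2} with $\MM(\R)$ being closed under intersection. For the forward direction, let $A = [a_1,a_2]$, $B = [b_1,b_2] \in \MM(\R)$ with $a_1 \preceq b_1 \preceq a_2 \prec b_2$, so that $A \cap B = [b_1,a_2]$. An attempted extension to the left has diameter $\R(b_1',a_2)$ for the element $b_1'$ immediately preceding $b_1$; Robinsonian gives $\R(b_1',a_2) \geq \R(b_1,a_2)$, and equality would force $\R(b_1',b_2) = \R(b_1,b_2)$ by \eqref{eq:strob1}, contradicting the maximality of $B$. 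The right extension is symmetric via \eqref{eq:strob2}, so $A \cap B \in \MM(\R)$.

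The main obstacle is the converse, which I would prove by contrapositive. Suppose \eqref{eq:strob1} fails at $w \preceq x \preceq y \preceq z$, i.e., $\R(x,y)=\R(w,y)$ but $\R(x,z) < \R(w,z)$. Take $A$ maximally linked at level $\R(x,z)$ containing $\{x,z\}$ and $B$ maximally linked at level $\R(w,y)$ containing $\{w,y\}$; then $w \notin A$ and $z \notin B$, so $A = [a_1,a_2]$ and $B = [b_1,b_2]$ satisfy $b_1 \preceq w \prec a_1 \preceq x \preceq y \preceq b_2 \prec z \preceq a_2$, giving $A \cap B = [a_1,b_2]$. Squeezing with Robinsonian inequalities yields $\R(a_1,b_2) = \R(w,y) = \R(w,b_2)$, so $[w,b_2] \supsetneq A \cap B$ has the same diameter, showing $A \cap B \notin \MM(\R)$. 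The failure of \eqref{eq:strob2} is dual. The delicate point throughout is tracking which endpoints of the intervals $A$ and $B$ are pinned by the maximality hypotheses so that the diameter computations collapse as required.
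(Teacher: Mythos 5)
Your proof is correct, and for most of its length it runs parallel to the paper's: maximally linked sets are shown to be intervals, the strict index property falls out of maximality, the inverse is the same map $\mu(\P,f)(x,y)=\min\{f(A): x,y\in A,\ A\in\P\}$, and your one-element-extension argument for ``strong $\Rightarrow$ intersection-closed'' is the paper's argument (extend $A\cap B$ past an endpoint, squeeze with the Robinsonian inequality, and invoke \eqref{eq:strob1} or \eqref{eq:strob2} against the maximality of $B$ or $A$). The genuine divergence is in the converse. The paper works on the set-system side: for an indexed pyramid it forms the hulls $\bar A=\bigcap_{A\subseteq B\in\P}B$, which exist exactly because $\P$ is intersection-closed, and derives the strong conditions for $\mu(\P,f)$ by a containment argument among hulls. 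You instead argue by contrapositive entirely on the matrix side: a violation of \eqref{eq:strob1} at $w\preceq x\preceq y\preceq z$ yields maximally linked witnesses $A\ni x,z$ and $B\ni w,y$ with $w\notin A$, $z\notin B$, and the intersection $[a_1,b_2]$ extends to $[w,b_2]$ without increasing the diameter. Your squeeze does check out, since $\R(a_1,b_2)$ and $\R(w,b_2)$ are both pinned between $\R(a_1,y)=\R(w,y)$ and $\mathrm{diam}(B)=\R(w,y)$, and the contrapositive is legitimate because the prepyramid-level bijection is already in place. What your route buys is that the hull operator never appears; what it costs is the endpoint bookkeeping you flag yourself. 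One small point worth making explicit in either treatment: for the singletons $\{x\}$ to lie in $\MM(\R)$ one needs $\R(x,y)>\R(x,x)$ for $y\neq x$, a convention you state openly but which Definition \ref{def:robinsonian} and the paper's own proof leave implicit.
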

\begin{proof}
Suppose $A \in \MM(\R)$ for $\R$ Robinsonian and let $a,b \in A$ be the leftmost and rightmost points in $A$. Then for every $a \preceq x \prec y \preceq b$, $\R(x,y) \leq R(a,b)$, 
so $diam(A) = M(a,b)$. This shows $x \in A$ for all $a \preceq x \preceq b$, so every set in $\mathcal{M}(R)$ is an interval.
Now suppose $A,B \in \MM(\R)$ with $A \subset B$. Let $A=[x_1,y_1], B=[x_2,y_2]$. Then $x_2 \preceq x_1 \preceq y_1 \preceq y_2$, and 
\[
f(A) = diam(A) = \R(x_1,y_1) < \R(x_2, y_2) = diam(B) = f(B),
\]
where the inequality follows from the fact that $A$ is a maximally-linked set. So $f$ is an index and $\tau(\R)$ is an indexed prepyramid.

Conversely, consider the map $\mu$ from indexed prepyramids to matrices given by 
\[
\mu(\P,f)(x,y) = \min_{\stackrel{A \in \P}{x,y \in A}} f(A).
\]
Let $\R=\mu(\P,f)$. Given $x \preceq y \preceq z$, let $E = \{A \in \P | x,y \in A\}$, $F = \{A \in \P | x,z \in A\}$. Then $F \subseteq E$, so
\[
\R(x,y) = \min_{A \in E} f(A) \leq \min_{A \in F} f(A) = \R(x,z).
\]
Similarly $\R(y,z) \leq \R(x,z)$, so $\R$ is Robinsonian. It is easy to check that $\P$ consists precisely of the sets that are maximally linked with respect to $R$, so $\tau$ and $\mu$ are inverses.

Now suppose $\R$ is a strong Robinsonian matrix. We must show $\tau(\R)$ is closed under intersection. 
Let $A=[a_1,b_1],B=[a_2,b_2]$ be sets in $\P$, suppose $a_1 \prec a_2 \preceq b_1 \prec b_2$ and let $C = A \cap B = [a_2, b_1]$.
We will show $C$ is a maximally linked set with diameter $\R(a_2,b_1)$. 
If $x \succ b_1$, the Robinsonian condition gives $\R(a_2,x) \geq R(a_2,b_1)$. If there was equality then by the strong Robinsonian condition $\R(a_1,x)=R(a_1,b_1)$
and $x \in A$, a contradiction. Similarly, there is no $x \prec a_2$ with $R(x,b_1) = R(y,a_1)$, so $C \in \P$ and $\P$ is closed under intersection.

Conversely, suppose $(\P,f)$ is an indexed pyramid and let $R = \mu((\P,f))$. Because $\P$ is closed under intersection, for each $A \subseteq X$ there is a unique $\bar{A} \in \P$ such that $A \subseteq \bar{A}$, and $\bar{A} \subseteq B$ for all $A \subseteq B \in \P$. This follows immediately from taking $\bar{A} = \bigcap_{A \subseteq B \in \P}B$. So now, suppose $w \prec x \prec y \prec z$ and $R(x,y) = R(x,z)$. The set $A := \overline{\{x,y\}} \cap \overline{\{w,y\}}$ is in $\P$ since $\P$ is closed under intersection. $f(\overline{\{x,y\}}) = f(\overline{\{x,z\}})$ which implies $z \in \overline{\{x,y\}}$. Now $x,y \in A$ implies $\overline{\{x,y\}} \in A$, so $z \in A$. But then $z \in \overline{\{w,y\}}$ which gives $R(w,z) = R(w,y)$. A similar argument shows $R(x,y) = R(w,y) \implies R(x,z) = R(w,z)$, so $R$ is a strong Robinsonian matrix.
\end{proof}
Given two elements $A,B \in \P$ we say $B$ is a \emph{predecessor} of $A$ if $A \subset B$ and there does not exist $C \in \P$ such that $A \subsetneq C \subsetneq B$. 
\begin{lemma}\label{lem:10}
Let $\P$ be a pyramid. Then each set in $\P$ has at most two predecessors.
\end{lemma}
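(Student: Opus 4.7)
The plan is to exploit the interval structure of a pyramid together with its closure under intersection. Fix the linear order $\prec$ with respect to which every set in $\P$ is an interval, and write $A = [a,b]$. Any predecessor $B$ of $A$ is an interval strictly containing $A$, so $B = [c,d]$ with $c \preceq a$, $d \succeq b$, and $(c,d) \neq (a,b)$.

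First I would establish the key lemma: if $B_1, B_2$ are distinct predecessors of $A$, then $B_1 \cap B_2 = A$. Because $\P$ is closed under intersection, $B_1 \cap B_2 \in \P$, and since $A \subseteq B_1 \cap B_2 \subseteq B_1$, the predecessor property of $B_1$ forces $B_1 \cap B_2 \in \{A, B_1\}$. If $B_1 \cap B_2 = B_1$, then $B_1 \subseteq B_2$; applying the predecessor property of $B_2$ to the chain $A \subsetneq B_1 \subseteq B_2$ forces $B_1 = B_2$, a contradiction.

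Next, suppose for contradiction that $A$ admits three distinct predecessors $B_i = [c_i, d_i]$ for $i = 1, 2, 3$. The lemma gives, for each pair $i \neq j$,
\[
[a,b] = B_i \cap B_j = [\max(c_i, c_j), \min(d_i, d_j)],
\]
so $\max(c_i, c_j) = a$ and $\min(d_i, d_j) = b$. If two of the $c_i$ were strictly less than $a$, their maximum would be strictly less than $a$, contradicting the identity; hence at least two of $c_1, c_2, c_3$ equal $a$. By the symmetric argument, at least two of $d_1, d_2, d_3$ equal $b$. Pigeonhole over three indices yields some $i$ with both $c_i = a$ and $d_i = b$, giving $B_i = A$, a contradiction.

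The only real obstacle is locating the correct use of closure under intersection; once one sees that distinct predecessors must meet exactly in $A$, the pigeonhole finish on the left and right endpoints is immediate, and no further case analysis on ``which side $B$ extends'' is needed.
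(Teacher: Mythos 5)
Your proof is correct and follows essentially the same route as the paper's: use closure under intersection to show that any two distinct predecessors meet exactly in $A$, then exploit the interval structure and a pigeonhole on the endpoints. The only cosmetic difference is the finish --- you pigeonhole to find a predecessor equal to $A$ itself, while the paper pigeonholes to find two predecessors sharing a left endpoint and hence nested --- and you spell out the justification of $B_1 \cap B_2 = A$, which the paper leaves implicit.
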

\begin{proof}
Suppose there is an $A = [a,b] \in \P$ with three distinct predecessors $B_i = [a_i, b_i]$, $i=1,2,3$. 
Because $\P$ is closed under intersection $B_i \cap B_j = A$ so either $a_i = a$ or $b_i=b$.
By the pigeonhole principle two of the $B_i$s must share an endpoint, so assume $a_1 = a_2 = a$.
Then either $B_1 \subset B_2$ or $B_2 \subset B_1$ contradicting the fact that each $B_i$ is a predecessor of $A$.
\end{proof}
For a set $A \in \P$, let $P_A = \{P_i\}$ denote the predecessors of $A$. If $(\P,f)$ is an indexed pyramid, we define the map $w: \P \to \mathbf{R}$ as the unique function satisfying
\begin{equation}\label{eq:w}
w(A) = \begin{cases} -f(A) & \mathrm{if\ }|P_A|=0,\\
  -f(A) + f(P_1) & \mathrm{if\ }|P_A| = 1, \\
  -f(A) + f(P_1) + f(P_2) - f(\overline{P_1 \cup P_2}) & \mathrm{if\ }|P_A|=2.
  \end{cases}
\end{equation}
By Lemma \ref{lem:10} this is well-defined. 
\begin{prop}
Let $R$ be a negative strong Robinsonian matrix satisfying the Robinsonian four-point condition, and take $\tau(R) = (\P,f)$. Then $f$ is negative, and $w(A) \geq 0$ for all $A \in \P$. Furthermore, every such indexed pyramid lies in the image of $\tau$.
\end{prop}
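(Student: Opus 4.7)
The plan is to verify the forward claims by direct computation and to handle the converse by building the preimage through the Kalmanson decomposition. Negativity of $f$ is immediate: $f(A) = \max_{x, y \in A} R(x, y) \leq 0$ since $R \leq 0$ by hypothesis.

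For $w(A) \geq 0$, I proceed by cases on the number of predecessors of $A$, which Lemma \ref{lem:10} bounds by two. If $A$ has no predecessors, $w(A) = -f(A) \geq 0$. If $A$ has a single predecessor $P_1$, strict indexing yields $w(A) = f(P_1) - f(A) > 0$. The main obstacle is the two-predecessor case: the proof of Lemma \ref{lem:10} forces $A = [a, b]$, $P_1 = [a_1, b]$, $P_2 = [a, b_2]$ with $a_1 \prec a \preceq b \prec b_2$ in the linear order underlying $\P$. The crux is the identity
\[
f(\overline{P_1 \cup P_2}) = R(a_1, b_2),
\]
because substituting it into \eqref{eq:w} gives
\[
w(A) = R(a_1, b) + R(a, b_2) - R(a, b) - R(a_1, b_2),
\]
which is nonnegative by \eqref{eq:RtoK} applied to $a_1 \preceq a \preceq b \preceq b_2$. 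To establish the identity, set $d = R(a_1, b_2)$ and let $[a^*, b^*]$ be the maximal interval with $a^* \preceq a_1$, $b^* \succeq b_2$, and $R(a_1, b^*) = R(a^*, b_2) = d$ (such choices exist and are unique by Robinsonian monotonicity). The four-point condition applied to $a^* \preceq a_1 \preceq b_2 \preceq b^*$ gives $R(a^*, b^*) \leq d$, while Robinsonian gives the reverse inequality, so $[a^*, b^*]$ has diameter exactly $d$. Maximality of $a^*$ and $b^*$, combined with Robinsonian monotonicity, then makes $[a^*, b^*]$ maximally linked, hence an element of $\P$. Any other maximally linked set containing $P_1 \cup P_2 = [a_1, b_2]$ with diameter $d$ must lie inside $[a^*, b^*]$ by the above maximality, forcing equality; any such set with diameter strictly greater than $d$ must strictly contain $[a^*, b^*]$. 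Hence $\overline{P_1 \cup P_2} = [a^*, b^*]$ and the identity follows.

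For the converse, given an indexed pyramid $(\P, f)$ with $f \leq 0$ and $w(A) \geq 0$ for all $A \in \P$, let $\S = \{\delta_r(A) \mid A \in \P\}$ and weight each split $\delta_r(A)$ by $w(A)$. Since weights are nonnegative, Theorem \ref{thm:circSplitsEquiv} produces the Kalmanson dissimilarity $D := \sum_{A \in \P} w(A) \D_{\delta_r(A)}$, whose Gromov product $R' := \phi_r(D)$ is a negative strong Robinsonian matrix satisfying \eqref{eq:RtoK} by Lemma \ref{lem:bonus}. It remains to verify $\tau(R') = (\P, f)$. A direct computation from the Gromov product gives $-R'(x, y) = \sum_{A \in \P,\, \{x, y\} \subseteq A} w(A)$, and since \eqref{eq:w} is precisely the M\"obius inversion of the relation $-f(C) = \sum_{A \in \P,\, A \supseteq C} w(A)$ on the pyramid poset (made explicit by the two-predecessor bound of Lemma \ref{lem:10}), one obtains $R'(x, y) = f(\overline{\{x, y\}}) = \mu(\P, f)(x, y)$, with $\mu$ as constructed in the proof of Proposition \ref{prop:RtoIP}. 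That proposition then identifies $\tau(R') = (\P, f)$, completing the proof.
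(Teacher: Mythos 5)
Your proof is correct and follows essentially the same route as the paper's: the same case analysis on $|P_A|$ for nonnegativity of $w$, with the two-predecessor case reduced via the identity $f(\overline{P_1\cup P_2})=R(a_1,b_2)$ to the Robinsonian four-point condition \eqref{eq:RtoK}. You supply details the paper elides (a careful proof of that identity, and an explicit converse via the M\"obius-inversion identity $-f(C)=\sum_{A\supseteq C}w(A)$ and Lemma \ref{lem:bonus}, where the paper only says ``this argument is reversible''), and in doing so you also avoid the typographical slips in the paper's displayed computation.
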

\begin{proof}
Let $R$ be a negative Robinsonian matrix satisfying \eqref{eq:RtoK}. Clearly \eqref{eq:w} holds for $|P_A|=0$ because $f$ is negative, and holds for $|P_A|=1$ because $P_1 \supset A \implies f(P_1) > f(A)$. So now assume $A = [x,y]$ has two predecessors. By the argument in Lemma \ref{lem:10} these must be of the form $B_1 = [w,y]$ and $B_w = [x,z]$ for some $w \prec x \prec y \prec z$, so
\begin{align*}
w(A) &= -f([x,y]) + f([w,y]) + f([w,z]) - f(\overline{\{w,z\}}) \\
 &= -R(x,y) + R(w,y) + R(w,z) - R(w,z) \\
&\geq 0,
\end{align*}
because $R$ satisfies the Robinsonian four-point condition. This argument is reversible, so we see $\tau$ really is a bijection.
\end{proof}
The requirement that $w(A) \geq 0$ for $A$ with two predecessors \eqref{eq:w} is thus a kind of four-point property for pyramids, and we will refer to it as such later.

Let $\eta_r$ be the map sending $(\P,f)$ to the weighted circular split system $(\S,w')$ given by $\S = \{\delta_r(A) | A \in \P\}$, $w'(\delta_r(A)) =w(A)$.
\begin{prop}
If $D$ is a Kalmanson metric, then $\nu \circ \eta_r \circ \tau \circ \phi_r$ is the identity map.
\end{prop}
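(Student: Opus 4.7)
The plan is to unfold all four maps and reduce the identity to a single telescoping statement on the pyramid. Set $R = \phi_r(D)$ and $(\P, f) = \tau(R)$, and let $w: \P \to \mathbf{R}$ be the weight function from \eqref{eq:w}. The composition unpacks as $\nu(\eta_r(\P, f))(x, y) = \sum_{A \in \P} w(A) \D_{\delta_r(A)}(x, y)$, and because $r \notin A$ for every $A \in \P$, the split indicators simplify to $\D_{\delta_r(A)}(x, r) = \mathbf{1}[x \in A]$ and $\D_{\delta_r(A)}(x, y) = \mathbf{1}[|\{x, y\} \cap A| = 1]$ for $x, y \in X \setminus \{r\}$. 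On the other side, $D$ itself is expressible through $(\P, f)$: by Proposition \ref{prop:RtoIP} we have $R(x, y) = f(\overline{\{x, y\}})$ and $R(x, x) = f(\{x\})$, and the inverse $\psi_r$ of the Gromov product gives $D(x, r) = -f(\{x\})$ and $D(x, y) = 2 f(\overline{\{x, y\}}) - f(\{x\}) - f(\{y\})$ for $x, y \in X \setminus \{r\}$.

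Combining these with the elementary identity $\sum_{A \text{ separates } x, y} = \sum_{A \ni x} + \sum_{A \ni y} - 2 \sum_{A \ni x, y}$, the target equality $\nu \circ \eta_r \circ \tau \circ \phi_r(D) = D$ reduces to a single assertion:
\[
\sum_{A \in \P,\, A \supseteq S} w(A) \; = \; -f(\bar S) \qquad (\ast)
\]
for every nonempty $S \subseteq X \setminus \{r\}$, where $\bar S$ is the smallest element of $\P$ containing $S$ (well-defined because $\P$ is closed under intersection). Applying $(\ast)$ with $S = \{x\}$ yields the $y = r$ case, and combining $S = \{x\}, \{y\}, \{x, y\}$ yields the $y \neq r$ case.

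The heart of the argument is therefore $(\ast)$, which I would prove by establishing the stronger pointwise identity $f(A) = -\sum_{B \in \P,\, B \supseteq A} w(B)$ for each $A \in \P$, by induction on $|\{B \in \P : B \supseteq A\}|$. The base case $A = X \setminus \{r\}$ is immediate from $w(X \setminus \{r\}) = -f(X \setminus \{r\})$. In the inductive step with $|P_A| = 2$, rearranging \eqref{eq:w} gives $f(A) = f(P_1) + f(P_2) - f(\overline{P_1 \cup P_2}) - w(A)$; applying the inductive hypothesis at $P_1$, $P_2$, and $\overline{P_1 \cup P_2}$ and combining by inclusion-exclusion recovers $-\sum_{B \supseteq A} w(B)$, closing the step. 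The cases $|P_A| \leq 1$ are analogous and simpler.

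The main obstacle is the inclusion-exclusion in the $|P_A| = 2$ case, which requires the set-theoretic identity $\{B \in \P : B \supsetneq A\} = \{B : B \supseteq P_1\} \cup \{B : B \supseteq P_2\}$ with intersection $\{B : B \supseteq \overline{P_1 \cup P_2}\}$. The first equality is where closure of $\P$ under intersection is essential: given any $B \supsetneq A$ in $\P$, the minimal element of $\{C \in \P : A \subsetneq C \subseteq B\}$, which exists by intersection closure, is a predecessor of $A$ contained in $B$. With these set identities in place, the seemingly ad hoc correction term $-f(\overline{P_1 \cup P_2})$ in \eqref{eq:w} is precisely what cancels the double-counting, and the induction telescopes cleanly.
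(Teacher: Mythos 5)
Your proposal is correct and follows essentially the same route as the paper: both reduce the claim to the telescoping identity $\sum_{B \in \P,\, B \supseteq A} w(B) = -f(\bar{A})$ via the decomposition of the split indicator into sums over sets containing $x$, $y$, and both, and then invert the Gromov product. The only difference is that you spell out the induction (including the inclusion--exclusion over the two predecessors and the role of intersection-closure) that the paper dismisses as ``an easy induction,'' which is a worthwhile addition.
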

\begin{proof}
Let $D' = \nu \circ \eta_r \circ \tau \circ \phi_r(D)$  and for $A \in \P$, let $O_A = \{B \in \P | A \subseteq B\}$ be the sets over $A$. A split $\delta_r(A)$ separates $x, y \in X \setminus \{r\}$
if $x \in A$ or $y \in A$ but not both. So the split pseudometric $D_{\delta_r(A)}(x,y)$ is $1$ iff $A \in O_{\{x\}} \setminus O_{\{x,y\}}$
or $A \in O_{\{y\}} \setminus O_{\{x,y\}}$. Then
\begin{equation}\label{eq:theend}
D'(x,y) = \displaystyle \sum_{A \in O_{\{x\}}}
w(A) - \displaystyle \sum_{A \in O_{\{x,y\}}} w(A)
+ \displaystyle \sum_{A \in O_{\{y\}}} w(A) -
\displaystyle \sum_{A \in O_{\{ x,y \} }}w(A).
\end{equation}
Now $O_A = O_{\bar{A}}$, so by an easy induction 
\[
\sum_{B \in O_A} w(B) = \sum_{B \in O_{\bar{A}}} w(B) = -f(\bar{A}),
\]
and $D'(x,y) = 2 f\left( \overline{\{x,y\}} \right) - f(\overline{\{x\}}) - f(\overline{\{y\}})$.
Since $f(\bar{A}) = diam(A)$, by the definition of the Gromov transform,
\[
D'(x,y) = \D(x,y) - \D(x,r) - \D(y,r) + \D(x,r) + \D(y,r) = \D(x,y).
\]
To compute $\D'(x,r)$ we note $x$ and $r$ are separated
by a split $\delta_r(A)$ iff $x \in A$, so
\[
D'(x,r) = \sum_{A \in O_{\overline{\{x\}}}} w(A) = -f(\overline{\{x\}}) = -\phi_r(D)(x,x) = D(x,r).
\] 
\end{proof}

Let $\R$ be a Robinsonian matrix over $X$. If the prepyramid $\P$ in $\tau(\R)$ is a rooted set family, Proposition \ref{thm:PQconstruct} shows there exists a PQ-tree $\T$ such that $\alpha(\T)=\P$. 
Unfortunately this is usually not the case, so
we seek instead to find a ``best fit'' tree. Analogous to the projective case, 
we construct the \emph{rooted closure} $\bar\P$ of $\P$ with the following algorithm.
\begin{algorithmic}
\STATE $\bar{\P} \gets \P$
\WHILE{$\bar\P$ contains a pair of incompatible sets $A,B$}
	\STATE $\bar{\P} \gets \bar{H} \cup \{A \cup B, A \cap B, A \setminus B, B \setminus A\}$
\ENDWHILE
\end{algorithmic}
Let $\theta$ be the closure map sending $\P$ to $\bar\P$. 
We then have the affine analog of Theorem \ref{thm:bestfitPC}:
\begin{lemma}\label{prop:bestfitPQ}
The PQ-tree $\T$ with $\alpha(\T) = \theta(\P)$ is the unique tree with $\alpha(\T) \supseteq \P$ that minimizes $|\alpha(\T)|$.
\end{lemma}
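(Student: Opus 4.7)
The plan is to mirror the proof of Theorem \ref{thm:bestfitPC} on the affine side, using Proposition \ref{thm:PQconstruct} (the bijection between PQ-trees and prepyramids that are rooted families) as the main structural tool. First I would verify that the closure algorithm defining $\theta$ terminates (since $\bar{\P} \subseteq 2^X$ is bounded) and that the result $\bar\P := \theta(\P)$ is both a rooted family (immediate from the termination condition of the while loop) and a prepyramid. For the latter, I would show that the linear ordering $\prec$ witnessing $\P$ as a prepyramid continues to witness $\bar\P$: if $A,B$ are incompatible intervals of $\prec$ with, say, $A=[a_1,b_1]$ and $B=[a_2,b_2]$ where $a_1 \prec a_2 \preceq b_1 \prec b_2$, then $A\cap B = [a_2,b_1]$, $A\cup B = [a_1,b_2]$, $A\setminus B = [a_1, a_2^-]$ and $B\setminus A = [b_1^+, b_2]$ are all intervals with respect to $\prec$. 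Hence each step of the algorithm preserves the prepyramid structure, and $\bar\P$ is a prepyramidal rooted family containing $\P$.

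Next I would invoke Proposition \ref{thm:PQconstruct} to obtain a unique PQ-tree $\T$ with $\alpha(\T) = \bar\P$, which establishes existence of the candidate tree. The minimality step is to show that every rooted family $\F \supseteq \P$ satisfies $\F \supseteq \bar\P$. I would argue this by induction on the steps of the closure algorithm: at every stage, any newly added set $A\cap B$, $A\cup B$, $A\setminus B$, or $B\setminus A$ arises from an incompatible pair $A,B \in \F$, so must already lie in $\F$ by the definition of a rooted family. Thus $\bar\P$ is the smallest rooted family containing $\P$.

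Now suppose $\T'$ is any PQ-tree with $\alpha(\T') \supseteq \P$. By Proposition \ref{thm:PQconstruct}, $\alpha(\T')$ is a prepyramidal rooted family, so by the preceding paragraph $\alpha(\T') \supseteq \bar\P = \alpha(\T)$. This gives $|\alpha(\T')| \geq |\alpha(\T)|$, with equality forcing $\alpha(\T') = \alpha(\T)$ and hence $\T' = \T$ by the bijection part of Proposition \ref{thm:PQconstruct}. This establishes both minimality and uniqueness.

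The only delicate step is verifying that $\bar\P$ remains a prepyramid — i.e., that closure under the four operations on incompatible pairs does not destroy the property of being simultaneously realized as intervals of a fixed linear order. The key observation making this work is that incompatibility of $A$ and $B$ (equivalently, $A\cap B \notin \{\emptyset, A, B\}$) together with both being intervals forces them to overlap in the pattern above, so all four derived sets are intervals of the same ordering. Everything else is a straightforward transposition of the closure argument used in Theorem \ref{thm:bestfitPC}.
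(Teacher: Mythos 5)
Your proof is correct and follows exactly the route the paper intends: the paper states this lemma without proof as the affine analog of Theorem \ref{thm:bestfitPC}, and your argument is precisely the transposition of that theorem's proof, with Proposition \ref{thm:PQconstruct} playing the role of Proposition \ref{thm:rect2}. The one point genuinely needing verification on the affine side --- that the closure $\theta(\P)$ remains a prepyramid for the same linear order --- is the overlap-pattern argument you give, which is sound.
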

It remains to show that $\theta$ commutes with the rest of the diagram. Let $D$ be a Kalmanson metric, $\S$ the corresponding 
split system and $\P$ the associated indexed pyramid. 
There is not necessarily a bijection between the intervals in $\P$ and the splits in $\S$; this can be seen, for example, because $\P$ 
is closed under intersection while $\S$ need not be. The splits in $\P$ that are not in $\S$ will get assigned weight zero
by the map $\eta_r$, which is why the lower rectangle commutes, but the maps $\theta$ and $\iota$ forget about the weights 
so it's not clear that $\theta \circ \delta_r = \iota \circ \psi_r$. Fortunately, for pyramids that arise from Kalmanson metrics
this bijection holds.
\begin{lemma}
$\delta_r \circ \theta \circ \tau \circ \phi_r(D) = \iota \circ \xi(D)$ for all Kalmanson metrics $D$.
\end{lemma}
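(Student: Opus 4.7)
The plan is to reduce the equality to two sub-claims: (a) the map $\delta_r$ intertwines the rooted-family closure on prepyramids of $X\setminus\{r\}$ with the unrooted-family closure on circular split systems of $X$, so $\delta_r(\theta(\F))=\iota(\delta_r(\F))$ for every prepyramid $\F$; and (b) the positive-weight support $\S_D$ of $\xi(D)$ and the full image $\delta_r(\P_D)$ generate the same unrooted family, where $\P_D:=\tau\circ\phi_r(D)$. Claim (a) handles the structural correspondence of closures, and claim (b) cleans up the fact that some pyramid sets are ``invisible'' to $\xi(D)$ because they have zero weight.

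For (a) I would carry out a direct, step-by-step verification. Two subsets $A,B\subseteq X\setminus\{r\}$ are set-incompatible iff the splits $\delta_r(A),\delta_r(B)$ are split-incompatible; the only non-automatic piece, $(X\setminus A)\cap(X\setminus B)\neq\emptyset$, follows because $r$ belongs to this intersection. A short computation then matches the four sets $A\cap B,\ A\setminus B,\ B\setminus A,\ A\cup B$ produced in the rooted closure of $\{A,B\}$ with the four splits produced in the unrooted closure of $\{\delta_r(A),\delta_r(B)\}$; the critical identity is that $r\in X\setminus(A\cup B)$ re-encodes the fourth split as $\delta_r(A\cup B)$. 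Since both closures terminate in finitely many steps on finite $X$, an induction on the number of steps yields (a).

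For (b), the preceding proposition gives $\nu\circ\eta_r\circ\tau\circ\phi_r=\mathrm{id}$, so the split system underlying $\xi(D)$ has support exactly $\S_D=\{\delta_r(A):A\in\P_D,\ w(A)>0\}$ with $w$ given by \eqref{eq:w}, and $\S_D\subseteq\delta_r(\P_D)$. The inclusion $\iota(\S_D)\subseteq\iota(\delta_r(\P_D))$ is monotonic, so I only need every $A\in\P_D$ with $w(A)=0$ to lie in $\theta(\gamma_r(\S_D))$. Formula \eqref{eq:w} forces such an $A$ to have exactly two predecessors $P_1,P_2$, which by the argument of Lemma \ref{lem:10} must be intervals $P_1=[w,y]$ and $P_2=[x,z]$ with $w\prec x\prec y\prec z$ (they cannot share an endpoint, else one would contain the other). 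Closure of $\P_D$ under intersection then pins down $A=P_1\cap P_2$, and the strict chain makes $P_1,P_2$ set-incompatible.

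I would conclude by downward induction on the inclusion order of $\P_D$. The maximal element $X\setminus\{r\}$ has no predecessor and hence $w=-f>0$, so it lies in $\gamma_r(\S_D)$; for any other $A\in\P_D$, either $w(A)>0$ and $A\in\gamma_r(\S_D)$ directly, or $w(A)=0$ and the two strictly larger, incompatible predecessors $P_1,P_2$ lie in $\theta(\gamma_r(\S_D))$ by the inductive hypothesis, forcing $A=P_1\cap P_2$ to lie there as well. Thus $\P_D\subseteq\theta(\gamma_r(\S_D))$, giving $\theta(\P_D)=\theta(\gamma_r(\S_D))$; combining with (a) produces
\[
\delta_r\circ\theta\circ\tau\circ\phi_r(D)=\delta_r(\theta(\P_D))=\iota(\delta_r(\gamma_r(\S_D)))=\iota(\S_D)=\iota\circ\xi(D).
\]
The main obstacle is the zero-weight case in (b): the four-point structure of the Robinsonian matrix has to do the real work of ensuring that the redundant pyramid sets do not contribute ``new'' data but are already forced into the unrooted closure of $\S_D$ via the identity $A=P_1\cap P_2$.
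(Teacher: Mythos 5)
Your proposal is correct, and its overall strategy matches the paper's: both reduce the lemma to (i) the fact that $\delta_r$ turns set-(in)compatibility into split-(in)compatibility and hence intertwines the two closure operators, and (ii) the fact that every zero-weight set of the pyramid is the intersection of two incompatible sets already captured by the closure of the positive-weight support. The difference lies in how you certify (ii). The paper argues in one step: for $A=[a,b]$ of weight zero it takes the immediate successor $c\succ b$, uses maximal linkedness to get the strict inequality $R(a,c)>R(a,b)$, expands this as a difference of weight sums via $\sum_{B\in O_A}w(B)=-f(\bar A)$, and thereby exhibits \emph{positive-weight} intervals $B=[x,b]$ and $C=[a,y]$ with $A=B\cap C$; no induction is needed. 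You instead read off from formula \eqref{eq:w} that a zero-weight $A$ must have exactly two predecessors $P_1,P_2$, identify $A=P_1\cap P_2$ via Lemma \ref{lem:10}, and run a downward induction since $P_1,P_2$ may themselves have weight zero. Your route is purely combinatorial on the indexed pyramid (it never returns to the Robinsonian matrix), and your explicit claim (a) fills in a step the paper leaves implicit; the paper's route avoids the induction at the cost of a small metric computation. One shared loose end: your base case asserts $w(X\setminus\{r\})=-f(X\setminus\{r\})>0$, but $f(X\setminus\{r\})=\max R$ can be $0$ (e.g.\ when $D(x,y)=D(x,r)+D(y,r)$ for the extremal pair); this is harmless only because $\delta_r(X\setminus\{r\})$ is a trivial split and trivial splits belong to every split system by Definition \ref{def:split} --- the same caveat applies to the paper's own argument, which needs $c\succ b$ to exist. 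Also note $x\preceq y$ rather than $x\prec y$ in your predecessor normal form, since $A$ may be a singleton; this does not affect the incompatibility of $P_1$ and $P_2$.
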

\begin{proof}
Let $(\P,f) = \tau \circ \phi_r(D)$ and let $(\S,w)$ the corresponding weighted split system.
Suppose $A \in \P$ but $\delta_r(A) \notin \S$, or equivalently $f(A)=0$. $A = [a,b]$ is an interval with respect to the Robinsonian metric.
If $c \succ b$ then $M(a,c) > M(a,b)$ because $c \notin A$, so
\[
0 < M(a,c) - M(a,b) = f(a,c) - f(a,b) = -\sum_{\stackrel{B \in \P}{a,c \in B}} w(B) + \sum_{\stackrel{B \in \P}{a,b \in B}} w(B).
\]
The first summand is a subset of the second, so there exists $B \in \P$ with $a,b \in B$, $c \notin B$ and $w(B) > 0$. Letting $c$ be the smallest element with $c \succ b$ shows there exists 
a set $B = [x,b] \in \P$ with $w(B) > 0$ and $x \prec a$. Similarly there exists a set $C = [a,y] \in \P$ with $w(C) > 0$ and $y \succ b$.
So $A = B \cap C$ for sets $B,C \in \P$ that correspond to splits of positive weight in $\S$,
and thus $\delta_r(A) \in \iota \circ \eta(\S)$. This completes the proof.
\end{proof}
We are now ready to state our final result that summarizes the
bijections described above. 
Let $PC$ be the set of all PC-trees, $CUF$ the set of all
circular, unrooted split families, $WCSS$ the set of all weighted
circular split systems, and $K$ the set of all Kalmanson metrics,
all over $X$. Let $PQ$ be the set of all PQ-trees,
$PRF$ the set of pyramidal rooted families, $IP$ the set of
negative indexed pyramids satisfying the pyramidal four-point condition, 
and $SR$ the set of negative strong Robinsonian matrices satisfying the Robinsonian four-point condition, all over $X \setminus \{r\}$.
\begin{thm} \label{thm:rectangle}
The following diagram commutes:
\[
\xymatrix{
PQ \ar@<.5ex>[d]^\alpha  \ar@<.5ex>[r]^{\kappa_r}  & PC \ar@<.5ex>[d]^\beta \ar@<.5ex>[l]^{\lambda_r} \\
PRF   \ar@<.5ex>[r]^{\delta_r} \ar@<.5ex>[u] & CUF \ar@<.5ex>[l]^{\gamma_r} \ar@<.5ex>[u] \\
IP \ar@<0ex>[u]^\theta \ar@<.5ex>[r]^\eta \ar@<.5ex>[d]^\mu
& WCSS \ar@<0ex>[u]_\iota \ar@<.5ex>[l] \ar@<.5ex>[d]^\nu \\
SR \ar@<.5ex>[u]^\tau \ar@<.5ex>[r]^{\psi_r} 
& K \ar@<.5ex>[u]^\xi \ar@<.5ex>[l]^{\phi_r}
}\]
\end{thm}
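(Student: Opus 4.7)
The plan is to view the theorem as a summary of the paper and to prove it by decomposing the diagram into three stacked rectangles --- the top between trees and families ($PQ$--$PC$--$PRF$--$CUF$), the middle linking those families to indexed pyramids and weighted split systems via the closure maps $\theta$ and $\iota$, and the bottom between metric-level objects ($IP$--$WCSS$--$SR$--$K$) --- and then verifying each rectangle separately. The top rectangle is then disposed of immediately: it is exactly Proposition \ref{thm:rect2}.

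For the bottom rectangle I would assemble four previously proved bijections together with a single commutation identity. Theorem \ref{thm:circSplitsEquiv} supplies $\nu \leftrightarrow \xi$ between $WCSS$ and $K$; Lemma \ref{lem:bonus} supplies $\phi_r \leftrightarrow \psi_r$ between $K$ and $SR$; Proposition \ref{prop:RtoIP}, together with the subsequent proposition restricting to the negative and four-point subclasses, supplies $\tau \leftrightarrow \mu$ between $SR$ and $IP$; and $\eta$ is the map assembled from the predecessor weight formula \eqref{eq:w}, whose inverse reads off intervals and their diameters from the weighted split system. Commutativity around this rectangle then reduces to the identity $\nu \circ \eta \circ \tau \circ \phi_r = \mathrm{id}$, which was proved in the proposition immediately preceding the definition of $\theta$.

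For the middle rectangle, Lemma \ref{prop:bestfitPQ} and Theorem \ref{thm:bestfitPC} establish that the closures $\theta$ and $\iota$ are well-defined minimal-completion maps on the two sides, each producing a bijection onto $PRF$ (respectively $CUF$). What remains is to verify $\delta_r \circ \theta = \iota \circ \delta_r$ on data originating from a Kalmanson metric, which is exactly the final unlabeled lemma of Section \ref{sec:metrics}. The main obstacle --- and the only place where the argument is more than bookkeeping --- is ensuring that these two closure operations yield \emph{the same} new sets and splits under $\delta_r$, rather than one strictly containing the other. This is the content of that last lemma: any index-zero set created by $\theta$ can be written as an intersection of two positive-weight sets of $\P$, so it is already forced by the projective closure $\iota$ and transported correctly by $\delta_r$; the reverse inclusion is immediate since splits in $\iota(\S) \setminus \S$ always correspond, under $\gamma_r$, to intersections of sets already in $\P$. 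Once this intertwining is in place, every arrow in the diagram is a bijection with inverse equal to the opposing arrow, and the three rectangles glue into the full commutative diagram claimed.
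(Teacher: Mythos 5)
Your proposal is correct and matches the paper's intent exactly: Theorem \ref{thm:rectangle} is stated as a summary whose proof is precisely the assembly you describe --- Proposition \ref{thm:rect2} for the top rectangle, Theorem \ref{thm:circSplitsEquiv}, Lemma \ref{lem:bonus}, Proposition \ref{prop:RtoIP} and the identity $\nu \circ \eta_r \circ \tau \circ \phi_r = \mathrm{id}$ for the bottom, and the final unlabeled lemma ($\delta_r \circ \theta \circ \tau \circ \phi_r = \iota \circ \xi$) for the middle gluing step. You also correctly isolate that last intertwining of the two closure maps as the only point requiring a genuine argument rather than bookkeeping.
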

This gives a way of constructing the best-fit PC-tree for a given Kalmanson metric $D$: 
\[
\T = \kappa_r \circ \alpha^{-1} \circ \theta \circ \tau \circ \phi_r(D).
\]

An example illustrating  Theorem \ref{thm:rectangle} is shown in
Figure 6. The PC-tree in the upper right reveals the tree
structure in the Kalmanson metric (see also Figure 2).

\begin{figure}
\label{fig6}
\begin{center}
\includegraphics[scale=0.35]{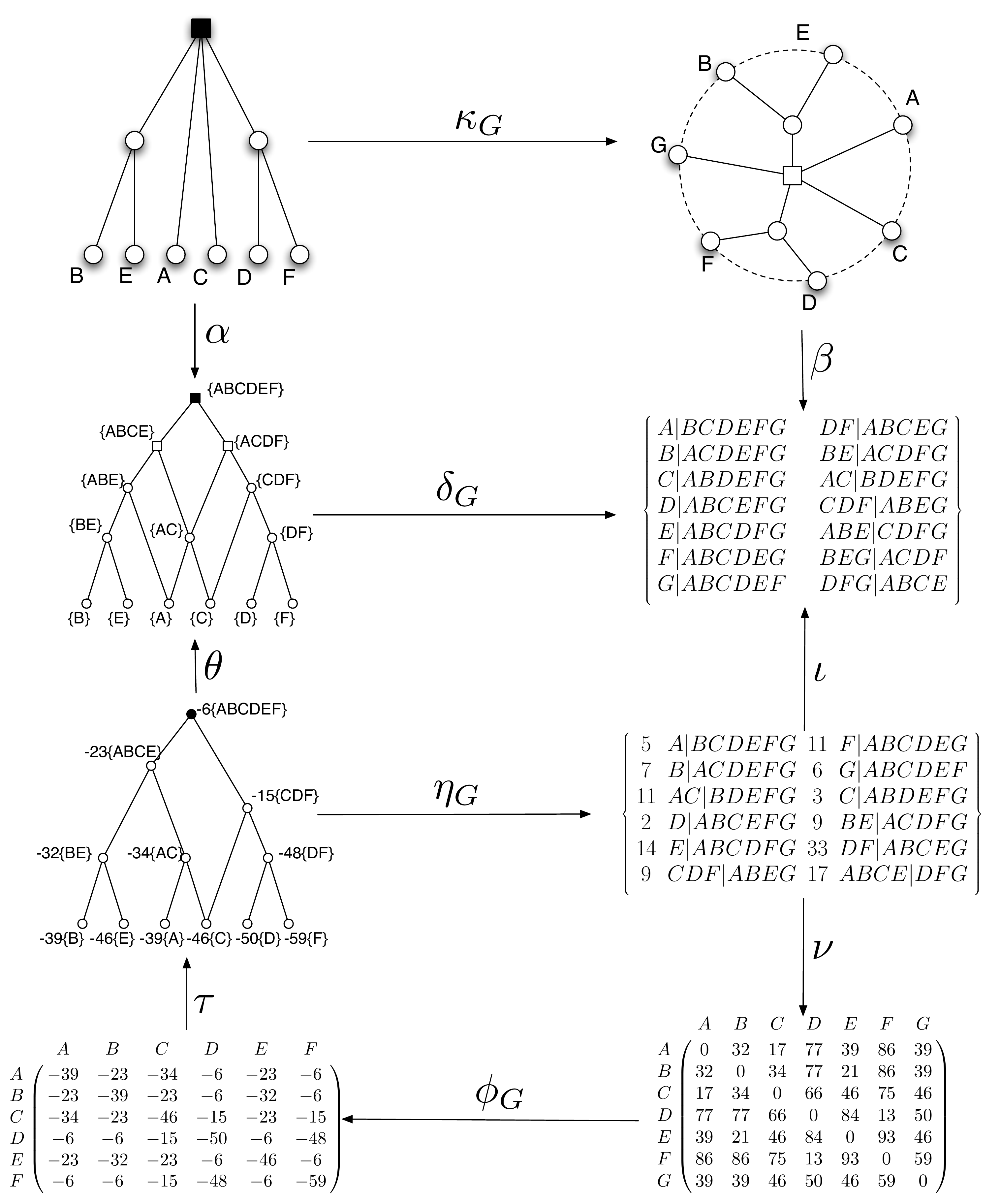}
\end{center}
\caption{An example illustrating Theorem \ref{thm:rectangle}.}
\end{figure}
As a final remark, we note that the geometry of Kalmanson metrics is also explored in
\cite{Terhorst2011} where the Kalmanson complex is described. PC-trees are faces in this complex, and it should be interesting to understand their combinatorics in the face lattice.

\bibliographystyle{amsplain}
\newpage
\bibliography{zproof}
\newpage
\begin{table}
\caption{Nomenclature and abbreviations.}
\begin{tabular}{l l}
AT & Affine (rooted) $X$-trees with root $r$ \\
PT & Projective (unrooted) $X$-trees\\
H & Hierarchies over $X \setminus \{r\}$\\
PSS & Pairwise compatible split systems over $X$\\
U & Ultrametrics\\
TM & Tree metrics\\
PQ & PQ-trees over $X \setminus \{r\}$ \\
PC & PC-trees over $X$ \\
PRF & Pyramids that are rooted families over $X \setminus \{r\}$ \\
CUF & Circular split systems that are unrooted families over $X$ \\
IP & Negative indexed pyramids satisfying \\
 & the pyramidal four-point condition over $X \setminus \{r\}$ \\
WCSS & Weighted circular compatible split systems over $X$\\
SR & Negative strong Robinsonian matrices satisfying \\
 & the Robinsonian four-point condition over $X \setminus \{r\}$\\
K & Kalmanson metrics over $X$
\end{tabular}
\end{table}
\end{document}